\newtheorem{theorem}{Theorem}[section]
 \newtheorem{lemma}[theorem]{Lemma}
 \newtheorem{corollary}[theorem]{Corollary}
 \newtheorem*{theorem*}{Theorem}
\theoremstyle{definition}
 \newtheorem{definition}[theorem]{Definition}
 \theoremstyle{remark}
 \newtheorem{remark}[theorem]{Remark}
 \newtheorem*{acknowledgments}{Acknowledgements}
\newcommand{\N}{\mathbf{N}}
\newcommand{\Q}{\mathbf{Q}}
\newcommand{\Sch}{\mathsf{Sch}}
\newcommand{\Var}{\mathsf{Var}}
\newcommand{\kr}{\,\begin{picture}(-1,1)(-1,-2)\circle*{2}\end{picture}\ }
\newcommand{\Spec}{\operatorname{Spec}}
\newcommand{\holim}{\mathrm{holim}}
\newcommand{\fil}{\mathrm{Fil}}
\newcommand{\mf}{\operatorname{MF}}
\newcommand{\dr}{\mathrm{dR}}
\newcommand{\drw}{\mathrm{dRW}}
\newcommand{\h}{\mathrm{h}}
\newcommand{\hk}{\mathrm{HK}}
\newcommand{\cris}{\mathrm{cris}}
\newcommand{\et}{\textrm{\'et}}
\newcommand{\nr}{\mathrm{nr}}
\newcommand{\ad}{\mathrm{ad}}
\newcommand{\OO}{\mathcal{O}}
\newcommand{\PP}{\mathcal{P}}
\newcommand{\A}{\mathcal{A}}
\renewcommand\labelenumi{(\roman{enumi})}
\renewcommand\theenumi\labelenumi
\begin{document}
\selectlanguage{british}

\title{A new proof of a vanishing result due to Berthelot, Esnault, and R\"ulling}
\author{ERTL Veronika }
\address{Keio University, Department of Mathematics, 3-14-1 Kouhoku-ku, Hiyoshi, Yokohama, 223-8522 Japan}
\address{Universit\"at Regensburg, Fakult\"at f\"ur Mathematik, Universit\"atsstra\ss{}e 31, 93053 Regensburg, Germany}
\email{ertl.vroni@gmail.com}  
\date{\today}
\thanks{The author was partly  supported by the DFG grant SFB 1085 ``Higher Invariants'' and by the Alexander von Humboldt Foundation  and the Japan Society for the Promotion of Science as a JSPS International Research Fellow.}
\maketitle 

\begin{abstract} 
The goal of this small  note is to give a  more concise proof of a result due to Berthelot, Esnault, and R\"ulling in \cite{BerthelotEsnaultRuelling2012}. 
For a regular, proper, and flat scheme $X$ over a discrete valuation ring of mixed characteristic $(0,p)$, it relates the vanishing of the cohomology  of the structure sheaf of the generic fibre of $X$ with the vanishing of the Witt vector cohomology of its special fibre. 
We use as a critical ingredient results and constructions by Beilinson \cite{Beilinson2014}  and Nekov\'a\v{r}--Nizio\l{} \cite{NekovarNiziol2016} related to the $\h$-topos over a $p$-adic field.
\end{abstract}
\selectlanguage{french}
\begin{abstract}
Le but de cette br\`eve  note est de donner une d\'emonstration plus courte d'un r\'esultat de Berthelot, Esnault et R\"ulling dans \cite{BerthelotEsnaultRuelling2012}. 
Pour un sch\'ema r\'egulier, propre et plat $X$ sur un anneau de valuation discr\`ete de caract\'eristique $(0,p)$, il lie la disparition de la cohomologie du faisceau structural de la fibre g\'en\'erique de $X$ \`a la disparition de la cohomologie de Witt de sa fibre sp\'eciale. 
On utilise de mani\`ere critique  des r\'esultats et des constructions de Beilinson \cite{Beilinson2014}  et Nekov\'a\v{r}--Nizio\l{} \cite{NekovarNiziol2016} concernant le $\h$-topos sur un corps $p$-adique.\\

\noindent
\textit{Key Words}: Vanishing theorems, $p$-adic Hodge theory, $\h$-topology.\\
\textit{Mathematics Subject Classification 2010}:   11G25, 14F20, 14F17
\end{abstract}
\medskip

\selectlanguage{british}

\section*{}

According to \cite{Beilinson2012} schemes of semistable reduction over a complete discrete valuation ring $\OO_K$ with perfect residue field form a basis of the $\h$-topology on the category $\Var(K)$ of varieties over the fraction field $K$ of $\OO_K$. 
As a consequence, $\h$-sheafification makes it sometimes possible to generalise constructions or results from schemes of semistable reduction to varieties over a $p$-adic field.

In this small note we want to illustrate the advantages of this technique and give a shorter and, as we hope, more conceptual proof of the following vanishing result due to Berthelot, Esnault, and R\"ulling in \cite[Thm.~1.3]{BerthelotEsnaultRuelling2012}.

\begin{theorem*}[P.~Berthelot, H.~Esnault, K.~R\"ulling]
Let $R$ be a discrete valuation ring  of mixed characteristic with fraction field $K$ and perfect residue field $k$, and let $X$ be a regular proper flat scheme over $R$. 
Assume that $H^q(X_K,\OO)=0$ for some $q\geqslant 0$.
Then $H^q(X_0,W\OO)_{\Q}=0$ as well.
\end{theorem*}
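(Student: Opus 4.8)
\emph{Proof sketch.} The plan is to translate both sides of the equivalence into $p$-adic Hodge theory and run a Newton-versus-Hodge polygon estimate. First I would reduce to the complete case: passing from $R$ to its completion $\widehat R$ changes neither the special fibre $X_0$ nor, by flat base change, the vanishing of $H^q(X_K,\OO)$, so I may assume $R=\OO_K$ is a complete discrete valuation ring with fraction field $K$ of mixed characteristic $(0,p)$ and perfect residue field $k$; put $K_0=W(k)[1/p]$. Since $X$ is regular and $K$ is perfect, $X_K$ is smooth and proper over $K$, and from the distinguished triangle $\Omega^{\geqslant 1}_{X_K/K}\to\Omega^\bullet_{X_K/K}\to\OO_{X_K}$ and its long exact cohomology sequence, the hypothesis $H^q(X_K,\OO)=0$ implies $\fil^1 H^q_{\dr}(X_K/K)=H^q_{\dr}(X_K/K)$, i.e.\ the Hodge filtration on $H^q_{\dr}(X_K/K)$ has no part in degree $0$.

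Next I would bring in the Hyodo--Kato cohomology of Beilinson and Nekov\'a\v{r}--Nizio\l \ attached to the variety $X_K$: a finite $(\varphi,N)$-module $D:=H^q_{\hk}(X_K)$ over $K_0$, equipped with a Hyodo--Kato isomorphism $D\otimes_{K_0}K\xrightarrow{\sim}H^q_{\dr}(X_K/K)$ under which $(D,\varphi,N,\fil)$ — with the Hodge filtration transported along the isomorphism — is weakly admissible. This is the $C_{\mathrm{st}}$-comparison, and it is precisely here that Beilinson's $\h$-descent and the constructions of \cite{Beilinson2014} and \cite{NekovarNiziol2016} are used; for smooth proper $X_K$ it can alternatively be quoted from the classical semistable conjecture together with de Jong's alteration theorem and Galois descent.

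The polygon argument is then soft. Since $\fil^1 D_K=D_K$, for every sub-$(\varphi,N)$-module $D'\subseteq D$ the induced filtration satisfies $\fil^1 D'_K=D'_K$, hence $t_H(D')=\sum_{i\geqslant 1}i\cdot\dim_K\mathrm{gr}^i D'_K\geqslant\dim_{K_0}D'$. Weak admissibility then yields $t_N(D')\geqslant t_H(D')\geqslant\dim_{K_0}D'$ for all such $D'$. Applying this to $D':=D^{<1}$, the largest sub-isocrystal of $D$ all of whose Frobenius slopes are $<1$ — which is automatically stable under $N$, because the relation $N\varphi=p\varphi N$ lowers slopes by $1$ — one finds $t_N(D^{<1})<\dim_{K_0}D^{<1}$ unless $D^{<1}=0$; hence $D^{<1}=0$, so every Frobenius slope of $H^q_{\hk}(X_K)$ is $\geqslant 1$.

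Finally I would identify the slope-$<1$ part of Hyodo--Kato cohomology with rational Witt vector cohomology of the special fibre. On one hand, Berthelot--Esnault--R\"ulling's own result gives $R\Gamma(X_0,W\OO_{X_0})_{\Q}\simeq R\Gamma_{\mathrm{rig}}(X_0/K_0)^{<1}$; on the other hand rigid cohomology satisfies proper, hence $\h$-, descent. Using de Jong's theorem together with Beilinson's result that strictly semistable $\OO_{K'}$-schemes, for finite extensions $K'/K$, form a basis of the $\h$-topology on $\Var(K)$, I would choose an $\h$-hypercover $\mathcal X_\bullet\to X$ with strictly semistable terms; then $\mathcal X_{\bullet,0}\to X_0$ is a proper hypercover, and for each term the slope-$<1$ parts of its rigid and of its log-crystalline cohomology agree and equal $R\Gamma_{\hk}$ of its generic fibre. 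Since taking the slope-$<1$ part is exact and commutes with the descent homotopy limit, assembling these identifications gives $R\Gamma(X_0,W\OO_{X_0})_{\Q}\simeq\bigl(R\Gamma_{\hk}(X_K)\bigr)^{<1}$, whence $H^q(X_0,W\OO)_{\Q}=D^{<1}=0$. I expect this last step to be the real obstacle: one must reconcile the model-dependent Witt vector cohomology of $X_0$ with the model-independent Hyodo--Kato cohomology of $X_K$, which is exactly why the argument has to be routed through rigid cohomology, its descent properties, and the comparison of rigid with log-crystalline cohomology in slopes $<1$ for strictly semistable schemes — the earlier steps being routine once the Hyodo--Kato formalism is in place.
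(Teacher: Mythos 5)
Your opening moves reproduce the paper's strategy: reduce to a complete base, observe that $H^q(X_K,\OO)=0$ forces $\fil^1 H^q_{\dr}(X_K)=H^q_{\dr}(X_K)$, and kill the Frobenius-slope-$<1$ part of an $\h$-descended Hyodo--Kato cohomology by a Newton-above-Hodge estimate. One imprecision: unless $X$ has semistable reduction over $\OO_K$ itself, the cohomology is only \emph{potentially} semistable, so there is in general no weakly admissible filtered $(\varphi,N)$-module $D$ over $K_0$ with $D\otimes_{K_0}K\cong H^q_{\dr}(X_K)$ --- the object you manipulate need not exist. The correct object is the $(\varphi,N,G_K)$-module $H^q_{\h}(X_{\overline{K}},\A_{\hk}^B)$ over $K_0^{\nr}$, with the filtration on $(\,\cdot\,\otimes_{K_0^{\nr}}\overline{K})^{G_K}$, as in Lemma~\ref{lem:polygon}; your polygon argument goes through verbatim there (the slope-$<1$ part is also $G_K$-stable since $G_K$ commutes with $\varphi$), and the vanishing descends to the $K_0$-form via (\ref{epsilon*}). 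This is fixable, but it is not cosmetic.

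The genuine gap is the final identification $H^q(X_0,W\OO)_{\Q}\cong H^q_{\hk}(X_K)^{<1}$, which you correctly flag as the obstacle, and the rigid-cohomology route you sketch does not close it. First, the term-wise input you quote --- that for the special fibre $Y$ of a strictly semistable scheme the slope-$<1$ parts of rigid and of log-crystalline cohomology agree --- is not an off-the-shelf fact; combined with the Berthelot--Bloch--Esnault theorem \cite{BerthelotBlochEsnault2007} (which is the correct reference for $R\Gamma(Y,W\OO)_{\Q}\simeq R\Gamma_{\mathrm{rig}}(Y/K_0)^{<1}$) it is \emph{equivalent} to $H^q(Y,W\OO)_{\Q}\cong H^q_{\cris}(Y^\times/W(k)^0)_{\Q}^{<1}$, i.e.\ precisely the logarithmic de~Rham--Witt slope spectral sequence computation that is the paper's main technical ingredient (Lemma~\ref{Lem:SlopeSpectralSequence}); you have relocated the difficulty rather than avoided it. Second, your descent bookkeeping is the dangerous point: the rigid/Witt descent runs along the simplicial $k$-scheme $\mathcal{X}_{\bullet,0}\to X_0$ while the Hyodo--Kato and de~Rham data descend along the generic fibres over $K$, and the term-wise comparisons must be made simplicial with respect to both structures at once; moreover the assertion that the special fibre of a strictly semistable hypercover of $X$ is a proper hypercover of $X_0$ is exactly the claim Berthelot--Esnault--R\"ulling single out as failing in general, which is what forced them into their subtle injectivity theorem. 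The paper sidesteps both problems by $\h$-sheafifying $W\omega^{\kr}$ itself on $\Var(K)$ (so all functoriality lives on the generic-fibre side), proving degeneration of the $\h$-sheafified slope spectral sequence to get $H^q_{\h}(X_K,\A_{\drw})^{<1}\cong H^q_{\h}(X_K,\A_{\drw}^0)$, and then invoking Bhatt--Scholze's $\h$-descent for $W\OO_{\Q}$ \cite{BhattScholze2017} only along the special fibre of the \v{C}ech nerve of a single de~Jong alteration (Lemma~\ref{lem:K-to-k}). To complete your version you would have to prove the slope-$<1$ comparison on semistable terms and its simplicial compatibility; the cleaner repair is to replace rigid cohomology by the $\h$-sheafified log de~Rham--Witt complex as the paper does.
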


As a consequence of this result the authors obtain, under the additional assumption that  $k$ is finite, a congruence on the number of rational points of $X$ with values in finite extensions of $k$. 
As explained in \cite{BerthelotEsnaultRuelling2012} this fits into the general analogy between the vanishing of Hodge numbers for varieties over a field of characteristic $0$ and congruences on the number of rational points with values in finite extensions for varieties over a finite field.

 The above theorem itself is an application of $p$-adic Hodge theory. In \cite[Thm.~2.1]{BerthelotEsnaultRuelling2012} the semistable case is discussed which we recall here briefly as it provides a guideline for our proof.
 
 Thus in the situation of the theorem, let $X/R$  be of semistable reduction. Without loss of generality one can assume that $R$ is a complete discrete valuation ring. 
 Endow $R$ and $X$ with  the canonical log structure denoted by $R^\times$ and $X^\times$,  the special fibre $X_0$ with the pull-back log structure denoted by $X_0^\times$, and the Witt vectors $W(k)$ with the log-structure associated to $(1\mapsto 0)$ denoted by $W(k)^0$. 
 Consider the log crystalline cohomology groups $H^q_{\cris}(X_0^\times/W(k)^0))$. 
 This cohomology is sometimes called the Hyodo--Kato cohomology. 
 In the case at hand, they can  be computed by the logarithmic de~Rham--Witt complex $W\omega^{\kr}$ which leads to a spectral sequence
 	$$
 	E_1^{ij}=H^j(X_0^\times,W\omega^i_{\Q})\Rightarrow H^{i+j}_{\cris}(X_0^\times/W(k)^0)_{\Q}
 	$$
endowed with a Frobenius action. 
On the left this Frobenius action is induced by $p^iF$, where $F$ is the Witt vector Frobenius, whereas on the right the Frobenius action $\varphi$ is induced by the absolute Frobenii of $X_0$ and $W(k)$. 
Similarly to the classical case, it follows that this spectral sequence degenerates at $E_1$ and that $H^j(X_0^\times,W\omega^i_{\Q})$ corresponds to the part of $H^{i+j}_{\cris}(X_0^\times/W(k)^0)_{\Q}$ where Frobenius has slope in $[i,i+1[$. 
Hence one obtains a canonical quasi-isomorphism
	$$
	H^q_{\cris}(X_0^\times/W(k)^0)_{\Q}^{< 1}\xrightarrow{\sim} H^q(X_0,W\OO)_{\Q}.
	$$
In other words, this means that Witt vector cohomology, which we want to study, corresponds to the part of crystalline cohomology of Frobenius slope $< 1$.
The cohomology group $H^q_{\cris}(X_0^\times/W(k)^0)_{\Q}$ is also equipped with a monodromy operator $N$ and a Hyodo--Kato isomorphism
	$$
	\iota_{\dr,\pi}: H^q_{\cris}(X_0^\times/W(k)^0)\otimes_{W(k)}K\xrightarrow{\sim} H^q_{\dr}(X_K),
	$$
where the de~Rham cohomology on the right hand side is equipped with the Hodge filtration. In particular, $H^q_{\cris}(X_0^\times/W(k)^0)_{\Q}$ can be regarded as an admissible filtered $(\varphi,N)$-module which implies that its Newton polygon lies above its Hodge polygon. 
But by assumption $H^q(X_K,\OO)=0$, which means that the part where the Hodge  slope is $<1$ vanishes. 
Hence  the same is true for the part where the Newton slope is $<1$, which is as we have seen isomorphic to $H^q(X_0,W\OO)_{\Q}$. 
This concludes the proof.

The philosophy behind this proof is that the cohomology groups  $H^q(X_K,\OO)$ and $H^q(X_0,W\OO)_{\Q}$, which are mathematical invariants associated to the generic and the special fibre, respectively, are in a certain sense part of a more comprehensive theory, namely absolute $p$-adic Hodge cohomology, which is realised in the category of admissible filtered $(\varphi,N)$-modules. The inherent structure provides intricate relations between invariants of the special and generic fibre. 

We realised that it is possible to use a very similar argument to obtain the more general statement of the theorem. 
In fact, it allows us to prove the theorem in a slightly more general case, namely for a proper, reduced and flat scheme over a discrete valuation ring $R$ of mixed characteristic, such that the generic fibre has at most Du Bois singularities. 

Let  $X$ be such a scheme.
Again, we want to interpret the cohomology groups $H^q(X_K,\OO)$ and $H^q(X_0,W\OO)_{\Q}$ in terms of  absolute $p$-adic Hodge cohomology. 
The ``right'' realisation category in this more general case is the category of admissible filtered $(\varphi,N,G_K)$-modules. 
In fact, D\'eglise and Nizio\l{} identified absolute $p$-adic Hodge cohomology for $K$-varieties (even without integral model) in \cite{DegliseNiziol2018}, 
where they look at the $\h$-sheafification on $K$-varieties  of a certain presentation of Hyodo--Kato  cohomology introduced by Beilinson.
According to their results, the associated cohomology groups $H^q_{\hk,\h}(X_K)$ of  the generic fibre $X_K$ can be seen as admissible filtered $(\varphi,N,G_K)$-module. 
For these the Newton and Hodge polygons are related in the same way as for $(\varphi,N)$-modules, i.e. their Newton polygon   lies above their Hodge polygon. 

We need now descent for the $\h$-topology to finish the proof. 
On the side of de~Rham cohomology a descent result due to Huber and J\"order in \cite{HuberJoerder2014}  shows that in the case of Du Bois singularities the part where the Hodge slope is $<1$ is exactly $H^q(X_K,\OO)$, which vanishes by assumption. 
This means that the part where the Newton slope of $H^q_{\hk,\h}(X_K)$ is $<1$ vanishes as well. 
We use a descent result for Witt vector cohomology due to Bhatt and Scholze in \cite{BhattScholze2017} to show that this is in fact $H^q(X_0,W\OO)_{\Q}$, which allows us to conclude.

Note that in our proof, as well as in the original one, descent results play an important role. 
In \cite{BerthelotEsnaultRuelling2012}, the authors work very explicitly with proper hypercovers over a discrete valuation ring to reduce to the semistable case. 
However, as they remark, the special fibre of such a hypercovering might not be a proper hypercover of the special fibre of the original scheme.
This is why they have to show an injectivity theorem  \cite[Thm.~1.5]{BerthelotEsnaultRuelling2012} which turns out to be very subtle. 

Our proof relies on the very sophisticated work of D\'eglise--Nizio\l{} and Nekov\'a\v{r}--Nizio\l{} which allows us to abstractly identify $H^q(X_K,\OO)$ and $H^q(X_0,W\OO)_{\Q}$ as part of a bigger picture.
Consequently,  we don't have to reduce to the semistable case, but nevertheless we have to invoke Bhatt--Scholze's descent theorem to extract the desired information about the special fibre.

\subsection*{Notation and conventions}

All schemes considered are separated and of finite type over a base $S$. 
For a fixed base scheme $S$ we denote this category by $\Sch(S)$, and by $\Var(S)$ the category of separated reduced schemes of finite type. When $S = \Spec R$ is affine, we write $\Sch(R)$ for $\Sch(S)$ and $\Var(R)$ for $\Var(S)$.

The general set-up throughout this note is as follows. Let $\OO_K$ be a complete discrete valuation ring of mixed characteristic $(0,p)$, with fraction field $K$ and perfect residue field $k$. 
Furthermore, we assume the valuation on $K$ normalised so that  $v(p)=1$.  
As usual denote by $\overline{K}$ an algebraic closure of $K$ and by $\OO_{\overline{K}}$ the integral closure of $\OO_K$ in $\overline{K}$. Let $W(k)$ be the ring of Witt vectors of $k$,  $K_0$ the fraction field of $W(k)$, and $K_0^{\nr}$ its maximal unramified extension.  
Let $\sigma$ be the absolute Frobenius on $W(\overline{k})$. For a scheme $X/\OO_K$ denote by $X_n$, for $n\in\N$, its reduction modulo $p^n$ and let $X_0$ and $X_K$ be its special and generic fibre respectively.

By abuse of notation, we denote by $\OO_K$, $\OO_K^\times$, and $\OO_K^0$ the scheme $\Spec \OO_K$ regarded as log scheme with the trivial, canonical (i.e. associated to the closed point) and $(\N\rightarrow \OO_K, 1\mapsto 0)$-log structure respectively, and similarly for $W(k)$ and $k$.

\begin{acknowledgments}
I would like to thank Shane Kelly and Wies\l{}awa Nizio\l{} for stimulating discussions related to this paper, and H\'el\`ene Esnault for very helpful explanations on her paper \textit{Rational points over finite fields for regular models of algebraic varieties of Hodge type $\geqslant 1$} with Berthelot and R\"ulling.
\end{acknowledgments}

 \section{The logarithmic de~Rham--Witt complex}\label{sec:1}

 It is common  when one studies non-smooth objects to consider  log versions of the usual complexes appearing in the different cohomology theories.  
 In \cite[Sec.~2]{HyodoKato1994} Hyodo and Kato describe the log crystalline site for schemes with fine log structure as a generalisation of the usual crystalline site. 
 One case which is relatively well studied, is the semistable case in positive characteristic $p$, and the case of semistable reduction in mixed characteristic $(0,p)$. 
This is a special case of fine log schemes of Cartier type over $k^0$ or $\OO_K^\times$. 

The Hyodo--Kato complex  computes   log crystalline cohomology over $W(k)^0$. 
There are different quasi-isomorphic presentations of this complex. One that is particularly useful to us is the de~Rham--Witt presentation233 from \cite[Sec.~4]{HyodoKato1994} (see also \cite[Sec.~1]{Lorenzon2002}).

For  a fine log scheme $Y$ of Cartier type over $k^0$, let $(Y/W_n(k)^0)_{\cris}$ be its log crystalline site over $W_n(k)$ endowed with the $(1\mapsto 0)$-log structure, let $\OO_{Y/W_n(k)^0}$ be its crystalline structure sheaf, and let $u_{Y/W_n(k)^0}:(Y/W_n(k)^0)_{\cris}\rightarrow Y_{\et}$ be the canonical morphism of sites. 

\begin{definition}
The logarithmic Witt differentials  of $Y$ of degree $i$ and level $n\geqslant 1$ are defined as
	$$
	W_n\omega^i_Y:= R^i u_{Y/W_n(k)^0,\ast}\OO_{Y/W_n(k)^0}.
	$$
\end{definition}
According to \cite[Cor.~1.17]{Lorenzon2002} $W_n\omega^i_Y$ is a coherent $W_n\OO_Y$-module. 
For $n$ fixed $W_n\omega^{\kr}_Y$ is a complex called the logarithmic de~Rham--Witt complex of level $n$.
By  \cite[Prop.~(4.6)]{HyodoKato1994} there is a canonical isomorphism of $W_n(k)$-algebras 
	$$
	W_n\omega^0_Y\cong W_n\OO_Y.
	$$
The Frobenius $F$, Verschiebung $V$ and projection operators extend from the Witt vectors to the de~Rham--Witt complex, and are subject to certain relations. 
In particular,  $W_n\omega^{\kr}_Y$ is a differentially graded $W_n\OO_Y$-algebra, which computes log crystalline cohomology, i.e. there is by \cite[Thm.~(4.19)]{HyodoKato1994}  a canonical quasi- isomorphism
	\begin{equation}\label{equ:dRW-cris}
	W_n\omega^{\kr}_Y\xrightarrow{\sim} Ru_{Y/W_n(k)^0,\ast}\OO_{Y/W_n(k)^0}
	\end{equation}
compatible with Frobenius and the canonical projections as $n$ varies. 
Here the Frobenius on the left hand side is given by $p^i F$ in degree $i$, while on the right hand side, it is induced by the absolute Frobenius of $Y$ and the endomorphism $\sigma$ of $W_n(k)$. 
The compatibility of this  morphism with the canonical projections was treated properly in \cite[\S~8]{Nakkajima2005}.

Finally, for $n$ varying we obtain a projective system   $\{W_n\omega^{\kr}_Y\}_{n\geqslant 1}$  differentially graded algebras.

\begin{definition}
We call the limit
	$$
	W\omega^{\kr}_Y:=\varprojlim W_n\omega^{\kr}_Y
	$$
the logarithmic de~Rham--Witt complex of $Y$ over $W(k)^0$. 
It is equipped with operators called Frobenius and Verschiebung induced by $F$ and $V$ which satisfy the usual relations. 
\end{definition}

The log crystalline cohomology of $Y$ over $W(k)^0$ is
	$$
	R\Gamma_{\cris}(Y/W(k)^0):=\holim R\Gamma_{\et}(Y, Ru_{Y/W_n(k)^0,\ast}\OO_{Y/W_n(k)^0}).
	$$
and we have by \cite[Cor.~1.23]{Lorenzon2002} the following statement. 

\begin{lemma}[P.~Lorenzon]
If $Y$ is a proper fine log scheme  of Cartier type over $k^0$, then  $R\Gamma_{\et}(Y,W\omega^{\kr}_Y)$ is a perfect complex and the canonical morphism (\ref{equ:dRW-cris}) induces a natural isomorphism 
	$$
	\lambda: R\Gamma_{\et}(Y,W\omega^{\kr})\xrightarrow{\sim}R\Gamma_{\cris}(Y/W(k)^0).
	$$
\end{lemma}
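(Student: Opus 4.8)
The plan is to deduce the statement from the level-$n$ comparison quasi-isomorphisms (\ref{equ:dRW-cris}) by passing to the homotopy limit, in the spirit of Illusie's argument in the classical non-logarithmic setting. First I would apply $R\Gamma_{\et}(Y,-)$ to (\ref{equ:dRW-cris}) for each $n\geqslant 1$, obtaining a family of quasi-isomorphisms
$$R\Gamma_{\et}(Y,W_n\omega^\kr_Y)\xrightarrow{\ \sim\ }R\Gamma_{\et}(Y,Ru_{Y/W_n(k)^0,\ast}\OO_{Y/W_n(k)^0}),$$
natural in $Y$ and compatible with the canonical projections as $n$ varies, the latter compatibility being exactly the content of \cite[\S~8]{Nakkajima2005}. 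Taking $\holim$ over $n$, the right-hand side becomes $R\Gamma_{\cris}(Y/W(k)^0)$ by the very definition recalled above, so the problem reduces to identifying $\holim_n R\Gamma_{\et}(Y,W_n\omega^\kr_Y)$ with $R\Gamma_{\et}(Y,W\omega^\kr_Y)$.

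For this, recall that $R\Gamma_{\et}(Y,-)$, being a right derived functor, commutes with homotopy limits, so it is enough to check that the naive inverse limit $W\omega^\kr_Y=\varprojlim_n W_n\omega^\kr_Y$ already computes the derived limit, degreewise. This follows from the surjectivity of the restriction maps $W_{n+1}\omega^i_Y\twoheadrightarrow W_n\omega^i_Y$: the resulting inverse systems of sheaves are Mittag--Leffler, hence $R^1\varprojlim$ vanishes termwise and the comparison between $\holim$ and $\varprojlim$ is exact. Combining the two steps yields the desired natural isomorphism
$$\lambda\colon R\Gamma_{\et}(Y,W\omega^\kr_Y)\xrightarrow{\ \sim\ }R\Gamma_{\cris}(Y/W(k)^0),$$
naturality being inherited from that of (\ref{equ:dRW-cris}) and of the transition maps involved.

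It remains to establish perfectness. Since $Y$ is proper over $k^0$ and each $W_n\omega^i_Y$ is a coherent $W_n\OO_Y$-module by \cite[Cor.~1.17]{Lorenzon2002}, living on the proper $W_n(k)$-scheme with underlying topological space $Y$, the complex $R\Gamma_{\et}(Y,W_n\omega^\kr_Y)$ is a bounded complex of $W_n(k)$-modules with finitely generated cohomology; boundedness comes from the vanishing of $W_n\omega^i_Y$ for $i>\dim Y$ together with the finiteness of the étale cohomological dimension of $Y$. Passing to the limit, $R\Gamma_{\et}(Y,W\omega^\kr_Y)$ is a bounded, $p$-adically derived complete complex of $W(k)$-modules whose reduction modulo $p$ is, via the level-one case of (\ref{equ:dRW-cris}) and the base-change results of \cite[Sec.~1]{Lorenzon2002}, the log de~Rham cohomology $R\Gamma_{\cris}(Y/k^0)$ of $Y$ over $k$, which has finite-dimensional cohomology over $k$. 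Since $W(k)$ is a complete discrete valuation ring, a derived $p$-complete complex whose mod-$p$ reduction is perfect over $k$ is itself perfect over $W(k)$, which concludes the argument.

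The step I expect to be the main obstacle is the control of the homotopy limit, namely verifying that the de~Rham--Witt sheaves form a Mittag--Leffler inverse system, so that no derived-limit correction intervenes and so that the limit complex has finitely generated cohomology, and that its reduction modulo $p$ is genuinely the log de~Rham complex of $Y/k$. Both of these inputs rely on the Cartier-type hypothesis and on properness in an essential way, whereas the rest of the argument is formal; this is precisely the structural work carried out in \cite[Sec.~1]{Lorenzon2002}.
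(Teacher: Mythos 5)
The paper does not prove this lemma at all: it is imported wholesale as \cite[Cor.~1.23]{Lorenzon2002}, so there is no internal argument to compare against. Your reconstruction is the standard one (finite-level comparison of Hyodo--Kato, pass to the limit, then a completeness argument for perfectness) and is essentially what Lorenzon does, so it is correct in outline. The only step I would tighten is the Mittag--Leffler claim: surjectivity of the transition maps of \emph{sheaves} does not by itself kill $R^1\varprojlim$; you also need that the $W_n\omega^i_Y$ are quasi-coherent over $W_n\OO_Y$, so that on affine opens the kernels have no higher cohomology and surjectivity of sheaf maps upgrades to surjectivity on sections --- this is exactly where the coherence result \cite[Cor.~1.17]{Lorenzon2002} enters, and it is worth saying so explicitly.
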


In  case $Y=X_0$ is the special fibre of a fine log scheme $X$ of Cartier type over $\OO_K^\times$, we write $R\Gamma_{\hk}(X):=R\Gamma_{\cris}(Y/W(k)^0)$ and call it the Hyodo--Kato complex of $X$. 
In this case, there is yet another presentation of the rational Hyodo--Kato complex due to Beilinson \cite[1.16.1]{Beilinson2014} denoted by $R\Gamma_{\hk}^B(X)$.
Without going into details we remark 
that there is a natural quasi-isomorphism \cite[(28)]{NekovarNiziol2016} 
	\begin{equation}\label{equ:BHK-HK}
	\kappa: R\Gamma_{\hk}^B(X)\xrightarrow{\sim} R\Gamma_{\hk}(X)_{\Q}
	\end{equation}
compatible with Frobenius.
The advantage of Beilinson's definition is that it admits a \textit{nilpotent} monodromy $N$ as explained in \cite[\S~3.1]{NekovarNiziol2016}. We will see later why this is of interest for us.

\section{Hyodo--Kato complexes for $K$-varieties}

 In \cite{NekovarNiziol2016} Nekov\'a\v{r} and Nizio\l{} describe how to extend several $p$-adic cohomology theories for log schemes over $k$ or $\OO_K$ to $K$-varieties. 
 This technique is based on observations due to Beilinson in  \cite{Beilinson2012} and we start by recalling some of the relevant notions.

\begin{definition}
For a field $K$ of characteristic $0$ a  geometric pair is an open embedding $i:U\hookrightarrow \overline{U}$ of $K$-varieties such that $U$ is dense in $\overline{U}$ and $\overline{U}$ is proper. A geometric pair is called a   normal crossings pair,  if $\overline{U}$ is regular and $\overline{U}\backslash U$ is a divisor with normal crossings. It is said to be  strict, if the irreducible components of $\overline{U}\backslash U$ are regular. 
\end{definition}

This definition can be adjusted to the arithmetic setting as follows.

\begin{definition}\label{def:Pairs}
An arithmetic $K$-pair is an open embedding $i:U\hookrightarrow \overline{U}$ of a $K$-variety $U$ with dense image in a reduced proper flat $\OO_K$-scheme $\overline{U}$. 
Such a pair is called a   semistable pair  if $\overline{U}$ is regular, $\overline{U}\backslash U$ is a divisor with normal crossings, and the closed fibre $\overline{U}_0$ is reduced. It is said to be   strict, if the irreducible components of $\overline{U}\backslash U$ are regular. 
\end{definition}

As explained in \cite[2.3]{NekovarNiziol2016} one can regard a semistable pair $(U,\overline{U})$ as a log scheme $\overline{U}$ over $\OO_K^\times$ equipped with the log structure associated to the divisor $\overline{U}\backslash U$. 
As such it is a proper log smooth fine log scheme  of Cartier type over $\OO_K^\times$. 
In particular, its special fibre $\overline{U}_0$ equipped  with the pull back log structure is a  proper log smooth fine log scheme  of Cartier type over $k^0$. 

Following \cite{NekovarNiziol2016} we denote by $\PP^{ar}_K$, $\PP^{ss}_K$ and $\PP^{nc}_K$ respectively the category of arithmetic, semistable and normal crossings pairs over $K$ respectively. 
Moreover, we denote by $\PP_K^{log}$ the subcategory of $\PP_K^{ar}$ of log schemes $(U,\overline{U})$ which are   log smooth over $\OO_{\overline{U}}(\overline{U})^\times$.

A key point in Beilinson's work  is that the categories $\PP^{ar}_K$, $\PP^{ss}_K$,  $\PP_K^{log}$,  and $\PP^{nc}_K$ respectively form a base for the $\h$-site $(Var/K)_{\h}$ of $K$-varieties in the sense that there is an equivalence between the associated $h$-topoi \cite[2.5~Prop.]{Beilinson2012}. 
Keeping in mind that alterations are $\h$-morphisms, Beilinson uses the following formulation of  de~Jong's alteration theorem \cite[2.3~Thm.]{Beilinson2012}. 

\begin{theorem}[A.J. de~Jong]
Every geometric pair admits a strict normal crossings alteration. 
Every arithmetic pair over $K$ or $\overline{K}$ admits a strict semistable alteration. 
Alterations can be chosen in such a way that $\overline{V}$ is projective.
\end{theorem}

Nekov\'a\v{r} and Nizio\l{} explain how to $\h$-sheafify the rational Hyodo--Kato and the Beilinson--Hyodo--Kato complexes on $\Var(K)$ \cite[Sec. 3.3]{NekovarNiziol2016}.  
Denote the resulting sheaves by $\A_{\hk}$ and $\A_{\hk}^B$.  
The same procedure can be done to the logarithmic de~Rham--Witt complex $W\omega^{\kr}$ defined above. 

\begin{definition}
Let $\A_{\drw}$ be the $\h$-sheafification of the presheaf 
  	$$
  	(U,\overline{U})\mapsto R\Gamma_{\et}((U,\overline{U})_0,W\omega^{\kr})_{\Q}
  	$$
on the category $\PP_K^{log}$ of proper log smooth fine log schemes  of Cartier type over $\OO_K^\times$. 
This results in an $\h$-sheaf of commutative dg $K_0$-algebras on $\Var(K)$. 
\end{definition}

The canonical maps $\kappa:R\Gamma_{\hk}^B(U,\overline{U})\rightarrow R\Gamma_{\hk}(U,\overline{U})_{\Q}$ \cite[Sec.~3.3]{NekovarNiziol2016} and $\lambda:R\Gamma_{\et}((U,\overline{U})_0,W\omega^{\kr})_{\Q} \rightarrow R\Gamma_{\hk}(U,\overline{U})_{\Q}$ \cite[Thm.~(4.19)]{HyodoKato1994} $\h$-sheafify and we obtain  functorial quasi-isomorphisms of $\h$-sheaves
  	\begin{equation}\label{equ:quasi-iso1}
  	\A_{\drw} \cong \A_{\hk}\cong\A_{\hk}^B.
  	\end{equation}

\begin{lemma}\label{lem:hdescenttheorem}
For any proper log smooth fine log scheme of Cartier type  $(U,\overline{U})\in \PP_K^{log}$  over $\OO_K^\times$,   the canonical map
  	$$
  	R\Gamma_{\et}((U,\overline{U})_0,W\omega^{\kr}_{\Q})\rightarrow R\Gamma_{\h}(U,\A_{\drw})
  	$$
is a quasi-isomorphism.
\end{lemma}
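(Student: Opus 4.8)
The content of Lemma~\ref{lem:hdescenttheorem} is that the de~Rham--Witt presheaf $(U,\overline{U})\mapsto R\Gamma_{\et}((U,\overline{U})_0,W\omega^{\kr})_{\Q}$ already satisfies cohomological $\h$-descent on $\PP_K^{log}$, so that $\h$-sheafifying it loses no information on objects of the base. The plan is to transport this to the $\h$-descent statement for Beilinson's Hyodo--Kato complex, which is part of the work of Nekov\'a\v{r}--Nizio\l{}, using the comparison quasi-isomorphisms already at our disposal. First I would observe that the maps $\lambda$ of (\ref{equ:dRW-cris}) and $\kappa$ of (\ref{equ:BHK-HK}) are functorial on $\PP_K^{log}$ \emph{before} sheafification: for each pair $(U,\overline{U})$ they furnish a chain of quasi-isomorphisms
$$
R\Gamma_{\et}((U,\overline{U})_0,W\omega^{\kr})_{\Q}\xrightarrow{\ \lambda\ }R\Gamma_{\hk}(U,\overline{U})_{\Q}\xleftarrow{\ \kappa\ }R\Gamma_{\hk}^B(U,\overline{U})
$$
compatible with the transition maps of the three presheaves along morphisms of pairs, and compatible with the canonical maps into their $\h$-sheafifications; after sheafification these are precisely the isomorphisms (\ref{equ:quasi-iso1}). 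Hence it suffices to prove that the canonical map $R\Gamma_{\hk}^B(U,\overline{U})\to R\Gamma_{\h}(U,\A_{\hk}^B)$ is a quasi-isomorphism.

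Next I would invoke Beilinson's theorem \cite[2.5~Prop.]{Beilinson2012} that $\PP_K^{ss}$ and $\PP_K^{log}$ are both bases of the $\h$-site $(Var/K)_{\h}$, together with the $\h$-descent properties of the Beilinson--Hyodo--Kato complex established in \cite[Sec.~3.3]{NekovarNiziol2016}. When $(U,\overline{U})$ is itself semistable this identification is their descent statement. In general I would choose a strict semistable $\h$-hypercover $(U_\bullet,\overline{U}_\bullet)\to(U,\overline{U})$ with $(U_n,\overline{U}_n)\in\PP_K^{ss}$, which exists because de~Jong's theorem (as recalled above) provides such alterations and alterations are $\h$-morphisms. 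Then $R\Gamma_{\h}(U,\A_{\hk}^B)\simeq\holim_n R\Gamma_{\h}(U_n,\A_{\hk}^B)\simeq\holim_n R\Gamma_{\hk}^B(U_n,\overline{U}_n)$ by descent for the $\h$-sheaf $\A_{\hk}^B$ and the semistable case, while the left-hand side is identified with $\holim_n R\Gamma_{\hk}^B(U_n,\overline{U}_n)$ by cohomological $\h$-descent for the presheaf $R\Gamma_{\hk}^B$ on $\PP_K^{log}$. Matching the two identifications yields the lemma.

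The hard part will be bookkeeping rather than a new idea: one must check that $\lambda$ and $\kappa$ $\h$-sheafify \emph{compatibly with the presheaf-to-sheaf maps}, so that the square relating ``presheaf value $\to$ sheafification'' for $W\omega^{\kr}$ commutes with the corresponding square for $R\Gamma_{\hk}^B$, and hence that the quasi-isomorphism $\kappa^{-1}\lambda$ identifies the two maps into $\h$-cohomology and not merely their sources. This is implicit in the construction of (\ref{equ:quasi-iso1}) but deserves to be spelled out. A secondary point is that an object of $\PP_K^{log}$ is log smooth over $\OO_{\overline{U}}(\overline{U})^\times$, so $R\Gamma_{\et}((U,\overline{U})_0,W\omega^{\kr})_{\Q}$ and $\lambda$ are a priori formed over $W(k')^0$ for the (still perfect) residue field $k'$ of $\OO_{\overline{U}}(\overline{U})$, after which one restricts scalars to $W(k)$; since \cite[Thm.~(4.19)]{HyodoKato1994} and Lorenzon's lemma hold at this level of generality, this causes no real trouble.
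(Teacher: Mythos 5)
Your argument is essentially the paper's proof: the paper likewise reduces the statement to the analogous $\h$-descent for the (Beilinson--)Hyodo--Kato complexes, which it cites directly from \cite[Prop.~3.18]{NekovarNiziol2016}, and then transfers it via the comparison quasi-isomorphisms (\ref{equ:quasi-iso1}). The only difference is that you re-derive the descent for $R\Gamma_{\hk}^B$ on $\PP_K^{log}$ from the semistable case by a hypercover argument, where the paper simply invokes the cited proposition; the compatibility bookkeeping you flag is exactly what the paper leaves implicit.
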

\begin{proof}
According to \cite[Prop.~3.18]{NekovarNiziol2016}  analogous statements are true for the Hyodo--Kato and Beilinson--Hyodo--Kato complexes. 
Because of the canonical quasi-isomorphism (\ref{equ:quasi-iso1}) the statement follows.
\end{proof}

 \section{Admissible filtered $(\varphi, N, G_K)$-modules}
 
 The Beilinson--Hyodo--Kato complex has additional structure and as such fits into the theory of $p$-adic Galois representations \cite{Fontaine1994}. 
 Indeed, it's cohomology groups are admissible filtered  $(\varphi,N,G_K)$-modules. 
 Let us explain this. 
 
 \begin{definition}
	\begin{enumerate}
\item A filtered $\varphi$-module is a triple $(M_0,\varphi,\fil^{\kr})$, where $M_0$ is a finite dimensional $K_0$-vector space, with a $\sigma$-semi-linear isomorphism $\varphi: M_0\rightarrow M_0$, called the Frobenius map and a decreasing, separated, exhaustive filtration $\fil^{\kr}$ on $M=M_0\otimes_{K_0}K$ called the Hodge filtration. 
\item A filtered $(\varphi,N)$-module $(M_0,\varphi,N,\fil^{\kr})$ consists of a filtered $\varphi$-module $(M_0,\varphi,\fil^{\kr})$ together with a $K_0$-linear monodromy operator $N$ on $M_0$ satisfying the relation $N\varphi=p\varphi N$. 
\item A filtered $(\varphi,N,G_K)$-module is a tuple $(M_0,\varphi,N,\rho,\fil^{\kr})$ where $M_0$ is a finite dimensional $K_0^{\nr}$-vector space, $\varphi: M_0\rightarrow M_0$ is a Frobenius map, $N: M_0\rightarrow M_0$ is a $K_0^{\nr}$-linear monodromy operator such that $N\varphi=p\varphi N$, $\rho$ is a $K_0^{\nr}$-semi-linear action of $G_K$ on $M_0$ factoring through a quotient of the inertia group and commuting with $\varphi$ and $N$, and $\fil^{\kr}$ is a decreasing, separated, exhaustive filtration of $M=(M_0\otimes_{K_0^{\nr}}\overline{K})^{G_K}$.
	\end{enumerate}
\end{definition}

Denote by 
	$$
	\mf_K^{\ad}(\varphi) \subset \mf_K^{\ad}(\varphi,N) \subset \mf_K^{\ad}(\varphi,N,G_K)
	$$ 
the categories of  admissible filtered $\varphi$-modules, $(\varphi,N)$-modules and $(\varphi,N,G_K)$-modules, where admissible is meant in the sense of \cite{Fontaine1994}. 
They are equivalent to crystalline, semistable, and potentially semistable Galois representations \cite{ColmezFontaine2000,Berger2002}. 
The categories of admissible filtered $\varphi$-, $(\varphi,N)$, and $(\varphi,N,G_K)$-modules are known to be Tannakian (c.f \cite[\S$\;$4.3.4]{Fontaine1994}). 
Thus it makes sense to consider their respective bounded derived dg categories denoted by $\mathcal{D}^{\flat}(\mf_K^{\ad}(\varphi))$, $\mathcal{D}^{\flat}(\mf_K^{\ad}(\varphi,N))$ and $\mathcal{D}^{\flat}(\mf_K^{\ad}(\varphi,N,G_K))$ respectively. 

A filtered $\varphi$-, $(\varphi,N)$-, or $(\varphi,N,G_K)$-module $M_0$ has both, a Newton polygon, associated to the eigenvalues of the Frobenius morphism $\varphi$ on $M_0$, and a Hodge polygon, associated to the filtration $\fil^{\kr}$ on $M$. 
If the Newton polygon lies above the Hodge polygon, $M_0$ is weakly admissible (c.f. \cite[4.4.6~Rem.]{Fontaine1994}). 

It turns out that this is the critical piece of information that will allow us to relate $H^q(X_K,\OO_{X_K})$ and $H^q(X_0,W\OO)_{\Q}=0$. 
This is possible because the Beilinson--Hyodo--Kato complex $R\Gamma_{\h}(Z_{\overline{K}},\A_{\hk}^B)$ of a $K$-variety is an object in $\mathcal{D}^{\flat}(\mf_K^{\ad}(\varphi,N,G_K))$. 
By definition it is equipped with a Frobenius $\varphi$, and a nilpotent monodromy operator $N$. 
Note that this is a difference from the usual Hyodo--Kato complex, where the monodromy is at best homotopically nilpotent. 
This is  crucial  to $\h$-sheafify the Hyodo--Kato morphism,  relating the (Beilinson--)Hyodo--Kato complex to the de~Rham complex, which provides the filtration. 

For a proper log smooth fine log scheme of Cartier type  $(U,\overline{U})\in \PP_K^{log}$  over $\OO_K^\times$ there is a morphism (c.f. \cite[(22)]{NekovarNiziol2016}, \cite[Sec.~5]{HyodoKato1994})
	$$
	\iota_{\dr,\pi}:R\Gamma_{\hk}(U,\overline{U})_{\Q} \rightarrow R\Gamma_{\dr}(U,\overline{U}_K)
	$$
called the Hyodo--Kato morphism which becomes a $K$-linear functorial quasi-isomorphism after tensoring with $K$. 
However, it depends on the choice of a uniformiser $\pi$ of $\OO_K$ and is therefore not suitable for $\h$-sheafification. 
By contrast, there is a morphism \cite[1.16.3]{Beilinson2014}
	$$
	\iota_{\dr}^B:R\Gamma_{\hk}^B(U,\overline{U}) \rightarrow R\Gamma_{\dr}(U,\overline{U}_K)
	$$
independent of the choice of a uniformiser which is also a $K$-linear functorial quasi-isomorphism after tensoring with $K$. 
For more details see \cite[Ex.~3.5(1)]{NekovarNiziol2016}. 
The two morphisms are compatible with the comparison map (\ref{equ:BHK-HK})
	$$
	\xymatrix{R\Gamma_{\hk}(U,\overline{U})_{\Q} \ar[r]^{\iota_{\dr,\pi}} & R\Gamma_{\dr}(U,\overline{U}_K)\\
	R\Gamma_{\hk}^B(U,\overline{U}) \ar[u]^\kappa_{\sim} \ar[ur]_{\iota_{\dr}^B} & }
	$$
as explained in \cite{NekovarNiziol2016} after Ex.~3.5. 

The map $\iota_{\dr}^B$ can be $\h$-sheafified. 
For this we also have to $\h$-sheafify the de~Rham complex on $\Var(K)$. 
Thus,  consider  the presheaf
	$$
	(U,\overline{U})\mapsto R\Gamma((U,\overline{U}),\Omega^{\kr})
	$$
of filtered $K$-algebras on $\PP_K^{nc}$ and let $\A_{\dr}$ be its $\h$-sheafification. 
This results in  an $\h$-sheaf of commutative filtered dg $K$-algebras on $\Var(K)$. 
It can be identified with Deligne's de~Rham complex equipped with Deligne's Hodge filtration $\fil^{\kr}$ (c.f. \cite[Prop.~7.4 and Thm.~7.7]{HuberJoerder2014}). 
Moreover, Beilinson showed in \cite[2.4]{Beilinson2012}  that for $(U,\overline{U})\in\PP_K^{nc}$ the canonical map
	$$
	R\Gamma_{\dr}(U,\overline{U})\xrightarrow{\sim} R\Gamma_{\h}(U,\A_{\dr})
	$$
is a quasi-isomorphism.

There are analogous statements over $\overline{K}$, and for $Z\in\Var(K)$ the projection $\varepsilon: Z_{\overline{K},\h}\rightarrow Z_{\h}$ of sites induces pull-back maps
 	\begin{eqnarray}\label{epsilon*}
 	&\varepsilon^\ast: R\Gamma_{\h}(Z,\A_{\hk}^B) \rightarrow R\Gamma_{\h}(Z_{\overline{K}},\A_{\hk}^B)^{G_K}&\\
 	&\varepsilon^\ast: R\Gamma_{\h}(Z,\A_{\dr}) \rightarrow R\Gamma_{\h}(Z_{\overline{K}},\A_{\dr})^{G_K}&\nonumber\\
 	\end{eqnarray}
where by \cite[Prop.~3.22]{NekovarNiziol2016} the first one is a quasi-isomorphism and the second one is a filtered quasi-isomorphism.  
The Beilinson--Hyodo--Kato map extends to $R\Gamma_{\h}(Z_{\overline{K}},\A^B_{\hk})\rightarrow R\Gamma_{\h}(Z_{\overline{K}},\A_{\dr})$,
which induces a quasi-isomorphism
	$$
	R\Gamma_{\h}(Z_{\overline{K}},\A^B_{\hk})\otimes_{K_0^{\nr}} \overline{K} \rightarrow R\Gamma_{\h}(Z_{\overline{K}},\A_{\dr}).
	$$
This identification provides the last piece of data necessary and by \cite[2.20]{DegliseNiziol2018} we have the following statement.

\begin{lemma}[F.~D\'eglise, W.~Nizio\l]
Let $Z\in \Var(K)$. Then $R\Gamma_{\h}(Z_{\overline{K}},\A_{\hk}^B)$ with the Frobenius $\varphi$, the monodromy operator $N$, the canonical $G_K$-action, and the Hodge filtration on $R\Gamma_{\h}(Z,\A_{\dr})$ is an object in $\mathcal{D}^{\flat}(\mf_K^{\ad}(\varphi,N,G_K))$.
\end{lemma}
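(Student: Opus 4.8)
The plan is to assemble the statement from the pieces already laid out in the excerpt, checking that each structure on $R\Gamma_{\h}(Z_{\overline{K}},\A_{\hk}^B)$ is well-defined, compatible, and satisfies the axioms of a $(\varphi,N,G_K)$-module, and then verifying admissibility. First I would recall that for a strict semistable pair $(U,\overline{U})$ over $\OO_K$ the cohomology $H^q_{\hk}^B(U,\overline{U})$ is, by Beilinson's construction and \cite{NekovarNiziol2016}, a finite-dimensional $K_0$-vector space carrying a $\sigma$-semilinear Frobenius $\varphi$ and a \emph{nilpotent} monodromy $N$ with $N\varphi=p\varphi N$; the quasi-isomorphism $\iota_{\dr}^B\colon R\Gamma_{\hk}^B\to R\Gamma_{\dr}$ after $\otimes_{K_0}K$ transports the Hodge filtration. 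The content of the lemma is that these structures survive $\h$-sheafification and base change to $\overline K$, yielding a potentially semistable, hence admissible, object.

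The key steps, in order, would be: (1) Use the quasi-isomorphisms $\A_{\drw}\cong\A_{\hk}\cong\A_{\hk}^B$ of (\ref{equ:quasi-iso1}) together with the $\h$-descent statements (Lemma~\ref{lem:hdescenttheorem} and its analogues in \cite[Prop.~3.18]{NekovarNiziol2016}) and de~Jong's alteration theorem to reduce the computation of $R\Gamma_{\h}(Z_{\overline{K}},\A_{\hk}^B)$, for arbitrary $Z\in\Var(K)$, to a finite (co)simplicial diagram of strict semistable pairs; this shows the cohomology groups are finite-dimensional $K_0^{\nr}$-vector spaces and inherit $\varphi$ and the nilpotent $N$ satisfying $N\varphi=p\varphi N$, since these relations hold levelwise and are preserved under the (homotopy) limits and colimits defining the $\h$-sheafification. (2) Produce the $G_K$-action: descent over $\overline K$ makes $Z_{\overline K,\h}$ carry a continuous $G_K$-action through the Galois action on $\overline K$, and since each strict semistable model is defined over a finite extension of $K$, the action on $H^q_{\hk}^B(Z_{\overline K})$ is $K_0^{\nr}$-semilinear and factors through a finite quotient of the inertia group; compatibility with $\varphi$ and $N$ is again checked levelwise. (3) Put the Hodge filtration on $H^q_{\h}(Z,\A_{\dr})$ via the identification with Deligne's filtered de~Rham complex \cite[Prop.~7.4, Thm.~7.7]{HuberJoerder2014}, and transport it to $(H^q_{\hk}^B(Z_{\overline K})\otimes_{K_0^{\nr}}\overline K)^{G_K}$ using the quasi-isomorphism $R\Gamma_{\h}(Z_{\overline K},\A_{\hk}^B)\otimes_{K_0^{\nr}}\overline K\xrightarrow{\sim}R\Gamma_{\h}(Z_{\overline K},\A_{\dr})$ and the filtered quasi-isomorphism $\varepsilon^\ast$ of (\ref{epsilon*}); this is exactly what forces the use of the uniformiser-independent map $\iota_{\dr}^B$ rather than $\iota_{\dr,\pi}$. (4) Finally, deduce admissibility: for a \emph{smooth proper} $K$-variety with semistable model, $H^q_{\hk}^B$ with these structures is the classical semistable Galois representation module, which is admissible; for general $Z$ one invokes that the category $\mathcal{D}^{\flat}(\mf_K^{\ad}(\varphi,N,G_K))$ is closed under the finite homotopy limits/colimits corresponding to alterations, so the cohomology of the hypercover computation lands in $\mf_K^{\ad}(\varphi,N,G_K)$ — this is precisely \cite[2.20]{DegliseNiziol2018}, which I would cite for the final assembly.

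The main obstacle is step (4), establishing admissibility (equivalently, that the object is weakly admissible, i.e.\ Newton above Hodge, and exact in the appropriate sense) for non-smooth $Z$: one cannot argue levelwise because weak admissibility is not stable under arbitrary subquotients in a naive way, and the spectral sequence from the hypercover mixes different cohomological degrees. The resolution is to stay at the level of derived categories and use that $\mathcal{D}^{\flat}(\mf_K^{\ad}(\varphi,N,G_K))$ is a full triangulated (even dg) subcategory of the derived category of filtered $(\varphi,N,G_K)$-modules, stable under cones and retracts, so that a complex built by finitely many cones from objects of $\mf_K^{\ad}(\varphi,N,G_K)$ again has admissible cohomology; this compatibility is the technical heart of \cite{DegliseNiziol2018} and \cite{NekovarNiziol2016}, and I would quote it rather than reprove it. The remaining verifications — semilinearity, the relation $N\varphi=p\varphi N$, factoring through finite inertia, compatibility of $N$ and $\varphi$ with $\rho$ — are all ``levelwise plus passage to the limit'' and are routine given the cited results.
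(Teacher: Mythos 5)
Your proposal is correct and follows essentially the same route as the paper: the Frobenius, nilpotent monodromy, $G_K$-action, and Hodge filtration (via the uniformiser-independent $\iota_{\dr}^B$ and the Huber--J\"order identification of $\A_{\dr}$ with Deligne's filtered de~Rham complex) are assembled exactly as in the paragraphs preceding the lemma, and admissibility is then quoted from \cite[2.20]{DegliseNiziol2018}. Indeed, the paper gives no independent proof of this lemma at all --- it is stated purely as a citation of that result --- so your expanded sketch of what goes into the citation is, if anything, more detailed than the source.
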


As a consequence we obtain the promised statement which relates the Hodge and the Newton polygon of  $H^q_{\h}(Z_{\overline{K}},\A_{\hk}^B)$.

\begin{lemma}\label{lem:polygon}
Let $Z\in\Var(K)$. For any  $q\in \N_0$, the Newton polygon of $H^q_{\h}(Z_{\overline{K}},\A_{\hk}^B)$ lies above its Hodge polygon. 
\end{lemma}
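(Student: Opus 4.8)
The plan is to combine the structure result from the previous lemma with the standard fact about weakly admissible filtered modules. By the lemma of D\'eglise and Nizio\l, the complex $R\Gamma_{\h}(Z_{\overline{K}},\A_{\hk}^B)$, equipped with $\varphi$, $N$, the $G_K$-action, and the Hodge filtration coming from $R\Gamma_{\h}(Z,\A_{\dr})$, is an object of $\mathcal{D}^{\flat}(\mf_K^{\ad}(\varphi,N,G_K))$. Since this derived dg category is the bounded derived category of the abelian category $\mf_K^{\ad}(\varphi,N,G_K)$, which is by construction closed under the formation of subquotients (admissibility is preserved, c.f.~\cite{Fontaine1994}), each cohomology object $H^q_{\h}(Z_{\overline{K}},\A_{\hk}^B)$ is again an object of $\mf_K^{\ad}(\varphi,N,G_K)$, i.e.~an admissible filtered $(\varphi,N,G_K)$-module.

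Next I would invoke the implication \emph{admissible $\Rightarrow$ weakly admissible}: for a weakly admissible filtered module the Newton polygon lies above the Hodge polygon, which is precisely Fontaine's characterisation recalled in the excerpt (\cite[4.4.6~Rem.]{Fontaine1994}). Applying this to $M_0 = H^q_{\h}(Z_{\overline{K}},\A_{\hk}^B)$ gives exactly the desired inequality of polygons. So the proof is essentially a two-line deduction:
\begin{proof}
By the previous lemma, $R\Gamma_{\h}(Z_{\overline{K}},\A_{\hk}^B)$ is an object of $\mathcal{D}^{\flat}(\mf_K^{\ad}(\varphi,N,G_K))$. Since the category $\mf_K^{\ad}(\varphi,N,G_K)$ is abelian (being Tannakian, c.f.~\cite[\S$\;$4.3.4]{Fontaine1994}) and closed under subquotients in the category of all filtered $(\varphi,N,G_K)$-modules, every cohomology group $H^q_{\h}(Z_{\overline{K}},\A_{\hk}^B)$ inherits the structure of an admissible filtered $(\varphi,N,G_K)$-module for each $q\in\N_0$. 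An admissible filtered module is in particular weakly admissible, and for a weakly admissible filtered $(\varphi,N,G_K)$-module the Newton polygon lies above the Hodge polygon (c.f.~\cite[4.4.6~Rem.]{Fontaine1994}). Applying this to $H^q_{\h}(Z_{\overline{K}},\A_{\hk}^B)$ yields the claim.
\end{proof}

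The main point that needs care is the passage from a statement about the complex in the derived category to a statement about its individual cohomology groups: one must know that $\mathcal{D}^{\flat}(\mf_K^{\ad}(\varphi,N,G_K))$ really is the derived category of the abelian category $\mf_K^{\ad}(\varphi,N,G_K)$, so that taking $H^q$ of an object lands back in $\mf_K^{\ad}(\varphi,N,G_K)$ rather than merely in the larger category of filtered $(\varphi,N,G_K)$-modules. This is where the admissibility (rather than just weak admissibility) matters, and where the stability of the admissible subcategory under kernels, cokernels, images and coimages is used; all of this is contained in the Tannakian structure cited in the excerpt, so no new work is needed here. The inequality of polygons itself is then purely formal once one is inside $\mf_K^{\ad}(\varphi,N,G_K)$, since weak admissibility is, by definition, the comparison of the Newton and Hodge polygons.
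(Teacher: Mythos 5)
Your proof is correct and follows essentially the same route as the paper: deduce from the D\'eglise--Nizio\l{} lemma that each $H^q_{\h}(Z_{\overline{K}},\A_{\hk}^B)$ is an admissible filtered $(\varphi,N,G_K)$-module, hence weakly admissible, and then apply Fontaine's comparison of Newton and Hodge polygons from \cite[4.4.6~Rem.]{Fontaine1994}. The only difference is that you spell out the (correct, and worth noting) passage from the derived-category statement to the individual cohomology groups, which the paper leaves implicit.
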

\begin{proof}
By the previous lemma $H^q_{\h}(Z_{\overline{K}},\A_{\hk}^B)$ is an admissible filtered $(\varphi,N,G_K)$-module. 
Therefore it is weakly admissible in the sense of Fontaine \cite[5.6.7~Thm.~(vi)]{Fontaine1994}. 
This means as remarked in \cite[4.4.6~Rem.]{Fontaine1994} that for each $q$ the Newton polygon of $H^q_{\h}(Z_{\overline{K}},\A_{\hk}^B)$  lies above its Hodge polygon.
\end{proof}

 \section{Two spectral sequences}

In this section we consider two spectral sequences, one for de~Rham cohomology and one for Hyodo--Kato cohomology, which are very similar in spirit for they are related to the Hodge and Newton slope of a $(\varphi,N,G_K)$-module in a geometric situation. 
The first one is an $\h$-sheafified version of the Hodge-to-de~Rham spectral sequence. 

For this we introduce the $\h$-sheafifications $\A_{\dr}^i$ of the differential sheaves $\Omega^i$, $i\geqslant0$, on $\Var(K)$. 
To be consistent with the constructions in the previous sections, we can think of $\A_{\dr}^i$ as the $\h$-sheafification of the presheaf
	$$
  	(U,\overline{U})\mapsto \Gamma((U,\overline{U}), \Omega^i)
  	$$
on the category $\PP_K^{nc}$ of normal crossing pairs. 
It gives a coherent $\h$-sheaf on $\Var(K)$. 

Now the Hodge filtration of $\A_{\dr}$ induces a spectral sequence for which Huber and J\"order in \cite[Thm.~7.7]{HuberJoerder2014} prove the following.

\begin{lemma}[A.~Huber, C.~J\"order]\label{lem:Hodge-deRham}
Let $Z\in\Var(K)$ be proper. 
Then the Hodge-to-de~Rham spectral sequence
	$$
  	E_1^{rs}=H^s_{\h}(Z,\A_{\dr}^r) \Rightarrow H^{r+s}_{\h}(Z,\A_{\dr})
  	$$
degenerates at $E_1$.
\end{lemma}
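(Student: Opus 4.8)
The plan is to deduce the $\h$-sheafified degeneration from the classical case on normal crossings pairs together with $\h$-descent. First I would recall that, by definition, $\A_{\dr}$ and $\A_{\dr}^r$ are the $\h$-sheafifications of the presheaves $(U,\overline{U})\mapsto R\Gamma((U,\overline{U}),\Omega^{\kr})$ and $(U,\overline{U})\mapsto \Gamma((U,\overline{U}),\Omega^r)$ on $\PP_K^{nc}$, and that these sheafified complexes recompute Deligne's de~Rham complex with its Hodge filtration (as cited after the definition of $\A_{\dr}$, via \cite[Prop.~7.4 and Thm.~7.7]{HuberJoerder2014}). In particular, for a proper $Z\in\Var(K)$ the hypercohomology $H^{r+s}_{\h}(Z,\A_{\dr})$ agrees with Deligne--Du~Bois de~Rham cohomology $H^{r+s}_{\dr}(Z)$, and the $E_1$-page $H^s_{\h}(Z,\A_{\dr}^r)$ agrees with the Du~Bois cohomology $H^s(Z,\underline{\Omega}^r_Z)$, so that the spectral sequence in the statement is exactly the Du~Bois (Deligne--Hodge-to-de~Rham) spectral sequence of the proper variety $Z$.

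Next I would invoke the classical degeneration result: for a proper complex variety (and, by the Lefschetz principle or by the arguments in \cite{HuberJoerder2014}, for a proper variety over any field of characteristic zero) the Deligne--Du~Bois Hodge-to-de~Rham spectral sequence degenerates at $E_1$. Concretely one reduces, via a smooth proper hypercover (which exists by de~Jong's alteration theorem in the form stated above, since alterations are $\h$-morphisms and smooth proper varieties form a basis of the $\h$-topology), to the classical Hodge-to-de~Rham degeneration for smooth proper varieties, and then one uses that the associated spectral sequence of the filtered total complex of a hypercover of degenerating pieces again degenerates — this is precisely the descent argument encapsulated in Huber--Jörder's theorem. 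Since the excerpt explicitly attributes exactly this statement to \cite[Thm.~7.7]{HuberJoerder2014}, the cleanest route is simply to transport their theorem through the identifications $\A_{\dr}\cong$ Deligne's de~Rham complex and $\A_{\dr}^r\cong\underline{\Omega}^r$, which are recorded in the same reference.

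The main obstacle — and the only genuine content beyond citation — is checking the compatibility of the filtrations: one must verify that the Hodge filtration $\fil^{\kr}$ on $\A_{\dr}$ used to build the spectral sequence in the statement really is carried, under the identification with Deligne's complex, to Deligne's Hodge filtration, so that the associated graded pieces are the $\A_{\dr}^r$ as defined here. This is a matter of comparing the bête (stupid) filtration on the presheaf $R\Gamma((U,\overline{U}),\Omega^{\kr})$ on $\PP_K^{nc}$ with Deligne's filtration after $\h$-sheafification; once one knows (again from \cite[Prop.~7.4, Thm.~7.7]{HuberJoerder2014}) that $\h$-sheafification of the stupid filtration yields Deligne's filtration, the identification of $E_1$-terms and of the abutment is automatic, and degeneration follows. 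I would therefore present the proof as: identify the spectral sequence with the Du~Bois spectral sequence of $Z$ via the cited comparison results, then apply \cite[Thm.~7.7]{HuberJoerder2014} directly.

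\begin{proof}
By construction $\A_{\dr}$ is the $\h$-sheafification on $\Var(K)$ of the filtered de~Rham complex $(U,\overline{U})\mapsto R\Gamma((U,\overline{U}),\Omega^{\kr})$ on $\PP_K^{nc}$, and $\A_{\dr}^r$ is the $\h$-sheafification of $(U,\overline{U})\mapsto\Gamma((U,\overline{U}),\Omega^r)$. By \cite[Prop.~7.4 and Thm.~7.7]{HuberJoerder2014} the complex $R\Gamma_{\h}(Z,\A_{\dr})$ together with the filtration induced by $\fil^{\kr}$ computes Deligne's de~Rham cohomology of $Z$ with its Hodge filtration, and the graded pieces recover the Du~Bois cohomology groups $H^s(Z,\underline{\Omega}^r_Z)\cong H^s_{\h}(Z,\A_{\dr}^r)$. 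Hence the spectral sequence
	$$
	E_1^{rs}=H^s_{\h}(Z,\A_{\dr}^r)\Rightarrow H^{r+s}_{\h}(Z,\A_{\dr})
	$$
is identified with the Deligne--Du~Bois Hodge-to-de~Rham spectral sequence of the proper $K$-variety $Z$. By \cite[Thm.~7.7]{HuberJoerder2014} this spectral sequence degenerates at $E_1$.
\end{proof}
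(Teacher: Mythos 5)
Your proposal is correct and matches the paper, which gives no independent argument but simply attributes the statement to \cite[Thm.~7.7]{HuberJoerder2014} after identifying $\A_{\dr}$ with Deligne's filtered de~Rham complex. Your additional remarks on the filtration compatibility and the hypercover reduction are a reasonable gloss on what that citation contains, but they are not needed beyond the identification you already make.
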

It becomes immediately clear that the Hodge filtration on $\A_{\dr}$ induces Deligne's Hodge filtration as mentioned above. 

\begin{corollary}\label{cor:Hodge-0-Part}
Let $Z\in\Var(K)$. 
Then for any $q\geqslant0$ the Hodge-to-de~Rham spectral sequence yields an isomorphism
  	$$
  	H^q_{\h}(Z,\A_{\dr})^{<1}\xrightarrow{\sim} H^q_{\h}(Z,\A_{\dr}^0),
  	$$
where on the left hand side we mean the part of $H^q_{\h}(Z,\A_{\dr})$ with Hodge slope $<1$. 
\end{corollary}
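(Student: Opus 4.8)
The plan is to deduce Corollary~\ref{cor:Hodge-0-Part} directly from the $E_1$-degeneration in Lemma~\ref{lem:Hodge-deRham} by unwinding what the Hodge filtration contributes in low slope. First I would recall that $E_1$-degeneration means the Hodge filtration $\fil^{\kr}$ on $H^q_{\h}(Z,\A_{\dr})$ has associated graded pieces $\mathrm{gr}^r_{\fil} H^q_{\h}(Z,\A_{\dr}) \cong E_1^{r,q-r} = H^{q-r}_{\h}(Z,\A_{\dr}^r)$ for all $r$. In particular the subspace $\fil^1 H^q_{\h}(Z,\A_{\dr})$ — which by definition of the Hodge polygon is exactly the part $H^q_{\h}(Z,\A_{\dr})^{\geqslant 1}$ of Hodge slope $\geqslant 1$ — fits in a short exact sequence with quotient $H^q_{\h}(Z,\A_{\dr})/\fil^1 \cong \mathrm{gr}^0_{\fil} H^q_{\h}(Z,\A_{\dr}) \cong H^q_{\h}(Z,\A_{\dr}^0)$.

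Second I would identify the quotient $H^q_{\h}(Z,\A_{\dr})/H^q_{\h}(Z,\A_{\dr})^{\geqslant 1}$ with the slope-$<1$ part $H^q_{\h}(Z,\A_{\dr})^{<1}$. This is a general fact about filtered vector spaces whose Hodge polygon has jumps only at integer slopes: the slope-$<1$ part, by the conventions set just before Lemma~\ref{lem:polygon} (where $H^q(X_K,\OO)$ is described as the Hodge-slope-$<1$ part), is precisely the complement of $\fil^1$, i.e.\ it maps isomorphically to the quotient $\mathrm{gr}^0_{\fil}$. Composing, one gets the asserted isomorphism $H^q_{\h}(Z,\A_{\dr})^{<1} \xrightarrow{\sim} H^q_{\h}(Z,\A_{\dr}^0)$, which is evidently functorial in $Z$.

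One technical point to be careful about is the properness hypothesis: Lemma~\ref{lem:Hodge-deRham} as stated requires $Z$ proper, whereas the corollary is stated for arbitrary $Z \in \Var(K)$. I expect the resolution is simply that in the application $Z$ will be proper (the theorem concerns $X$ proper over $R$), or that one should read the corollary with the implicit properness assumption inherited from the spectral-sequence statement it invokes; I would add a parenthetical remark to that effect rather than attempt a non-proper extension, since the degeneration can genuinely fail without properness.

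The main obstacle is essentially bookkeeping rather than substance: one must match the two notions of ``slope $<1$'' — the one coming from the Hodge polygon of the $(\varphi,N,G_K)$-module structure (jumps at $r$ with multiplicity $\dim \mathrm{gr}^r_{\fil}$) and the one coming literally from the filtration drop between $\fil^0 = H^q_{\h}(Z,\A_{\dr})$ and $\fil^1$ — and check these agree, so that the slope-$<1$ part is canonically the $r=0$ graded piece. Once that identification is pinned down the corollary is immediate from $E_1$-degeneration; no further input is needed.
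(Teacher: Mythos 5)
Your argument is correct and is exactly the deduction the paper intends: the corollary is stated without proof as an immediate consequence of the $E_1$-degeneration in Lemma~\ref{lem:Hodge-deRham}, with $H^q_{\h}(Z,\A_{\dr})^{<1}$ read as the quotient $H^q_{\h}(Z,\A_{\dr})/\fil^1 \cong \mathrm{gr}^0_{\fil} \cong E_1^{0,q}$ via the edge map. Your observation that the properness hypothesis of Lemma~\ref{lem:Hodge-deRham} is silently dropped in the corollary's statement is a fair catch of a minor imprecision in the paper; in the only application ($Z = X_K$ with $X$ proper over $\OO_K$) properness holds, so nothing is lost.
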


We come now to an analogous statement for the Hyodo--Kato cohomology. 
Thus for $i\geqslant 0$ consider  the $\h$-sheafification $\A_{\drw}^i$ of the presheaf
  	$$
  	(U,\overline{U})\mapsto \Gamma((U,\overline{U})_0, W\omega^i)_{\Q}
  	$$
on the category $\PP_K^{log}$ of proper log smooth fine log scheme  of Cartier type over $\OO_K$. 
This is a quasi-coherent $\h$-sheaf on $\Var(K)$. 

\begin{lemma}\label{Lem:SlopeSpectralSequence}
Let $Z\in\Var(K)$. There is a spectral sequence 
  	$$
  	E_1^{rs}=H^s_{\h}(Z,\A_{\drw}^r) \Rightarrow H^{r+s}_{\h}(Z,\A_{\drw})
  	$$
which is Frobenius equivariant and degenerates at $E_1$.
\end{lemma}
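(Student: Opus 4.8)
The plan is to read off the spectral sequence from the brutal (``Hodge'') filtration on the complex of $\h$-sheaves $\A_{\drw}$, to verify Frobenius equivariance already at the level of that filtration, and to deduce degeneration from the Hyodo--Kato slope theorem combined with the disjointness of Frobenius slope ranges. The point is that the differentials of the spectral sequence shift the filtration index, hence the range of Frobenius slopes, and a Frobenius-equivariant map between pieces with disjoint slope ranges vanishes.

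For each pair $(U,\overline{U})\in\PP_K^{log}$ I would filter the logarithmic de~Rham--Witt complex of $(U,\overline{U})_0$ by its brutal subcomplexes $W\omega^{\geqslant r}$, apply $R\Gamma_{\et}(-)_{\Q}$ and $\h$-sheafify; this produces a functorial decreasing filtration $\fil^{\kr}$ on $\A_{\drw}$ whose $r$-th graded piece is the $\h$-sheafification of $(U,\overline{U})\mapsto R\Gamma_{\et}((U,\overline{U})_0,W\omega^r)_{\Q}[-r]$. The first point to settle is that this graded piece is $\A_{\drw}^r[-r]$; equivalently, that the presheaves $(U,\overline{U})\mapsto H^s_{\et}((U,\overline{U})_0,W\omega^r)_{\Q}$ $\h$-sheafify to $0$ for $s\geqslant 1$. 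This is the Witt vector counterpart of \cite[Prop.~7.4]{HuberJoerder2014}, and I would extract it from the $\h$-descent theorem of Bhatt--Scholze \cite{BhattScholze2017}: the special fibres of pairs in $\PP_K^{log}$ are proper over $k$, so the groups in question are finite-dimensional $K_0$-vector spaces, and $\h$-sheafification kills their positive-degree part. Granting this, $\fil^{\kr}$ gives the spectral sequence $E_1^{rs}=H^s_{\h}(Z,\A_{\drw}^r)\Rightarrow H^{r+s}_{\h}(Z,\A_{\drw})$. The Frobenius on $\A_{\drw}$ inherited from (\ref{equ:dRW-cris}) --- equivalently, via (\ref{equ:quasi-iso1}), from the one on $\A_{\hk}$ --- is given by $p^r F$ on $W\omega^r$; using $dF=pFd$ it is a chain map stabilising every $W\omega^{\geqslant r}$, hence induces a Frobenius on each page with respect to which all differentials $d_k$ are equivariant. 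This yields the spectral sequence together with its Frobenius equivariance.

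For the degeneration I would argue on the $E_1$-page. Choose an $\h$-hypercover $(V_\bullet,\overline{V}_\bullet)\to Z$ by strict semistable pairs, which exists by de~Jong's alteration theorem in the form used by Beilinson \cite[2.3~Thm.]{Beilinson2012}. By $\h$-descent for the sheaf $\A_{\drw}^r$ --- again through Lemma~\ref{lem:hdescenttheorem} and \cite{BhattScholze2017} --- the group $H^s_{\h}(Z,\A_{\drw}^r)$ is the abutment of a Frobenius-equivariant spectral sequence built from the groups $H^b_{\et}(\overline{V}_{n,0},W\omega^r)_{\Q}$. By the Hyodo--Kato slope theorem recalled in the introduction, $\varphi$ acts on each $H^b_{\et}(\overline{V}_{n,0},W\omega^r)_{\Q}$ with all slopes in $[r,r+1[$; since that property is stable under subquotients, extensions and the cohomological operations occurring in the descent spectral sequence, $\varphi$ acts on $E_1^{rs}=H^s_{\h}(Z,\A_{\drw}^r)$ with all slopes in $[r,r+1[$ as well. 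For $k\geqslant 1$ the differential $d_k\colon E_k^{rs}\to E_k^{r+k,s-k+1}$ is then a Frobenius-equivariant map from a subquotient with slopes in $[r,r+1[$ to one with slopes in $[r+k,r+k+1[$; as these ranges are disjoint, $d_k=0$, and the spectral sequence degenerates at $E_1$. Equivalently, one may record the Hyodo--Kato slope theorem as a functorial --- hence $\h$-local --- decomposition $R\Gamma_{\hk}(U,\overline{U})_{\Q}\simeq\bigoplus_r R\Gamma_{\et}((U,\overline{U})_0,W\omega^r)_{\Q}[-r]$ in the derived category on semistable pairs, which $\h$-sheafifies to a splitting of $\fil^{\kr}$ and makes the degeneration manifest.

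The genuine obstacle is not the degeneration, which is formal once the slope bound on $E_1$ is in hand, but the two passages from semistable pairs to an arbitrary $Z\in\Var(K)$: that positive-degree Witt vector cohomology of the special fibre dies $\h$-locally, so the $E_1$-terms really are the sheaves $\A_{\drw}^r$ rather than their derived versions, and that the Hyodo--Kato slope bound survives the $\h$-descent spectral sequence. Both rest on Bhatt--Scholze's descent theorem \cite{BhattScholze2017}, which is the technical heart of the argument.
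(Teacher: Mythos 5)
Your overall architecture is the same as the paper's: the spectral sequence comes from the brutal filtration on the de~Rham--Witt presheaf, Frobenius equivariance is checked exactly as you do (the endomorphism $p^rF$ in degree $r$ preserves $W\omega^{\geqslant r}$), and degeneration is reduced, via $\h$-hypercovers by semistable pairs, to the classical slope decomposition of Hyodo--Kato cohomology. The paper phrases the conclusion as a canonical identification of $E_1^{rs}$ with the slope-$[r,r+1[$ part of the abutment (degeneration then following by a dimension count), whereas you kill the differentials by disjointness of slope ranges; these are two faces of the same argument. However, two of your supporting steps do not work as stated. First, the claim that the presheaves $(U,\overline{U})\mapsto H^s_{\et}((U,\overline{U})_0,W\omega^r)_{\Q}$, $s\geqslant 1$, $\h$-sheafify to zero \emph{because the groups are finite-dimensional} is a non sequitur: finite-dimensionality says nothing about whether classes die on $\h$-covers, and Bhatt--Scholze's theorem asserts $\h$-descent for $R\Gamma(W\OO)_{\Q}$, not acyclicity of higher cohomology presheaves. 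The paper avoids the issue by working with the derived presheaves $R\Gamma_{\et}((U,\overline{U})_0,W\omega^r)_{\Q}$ throughout and computing the relevant groups as colimits over hypercovers; if you insist on identifying the graded pieces with the underived sheaves $\A^r_{\drw}$ you owe an actual argument here.

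Second, and more seriously, your slope bound on $E_1^{rs}=H^s_{\h}(Z,\A^r_{\drw})$ rests on ``$\h$-descent for the sheaf $\A^r_{\drw}$ through Lemma~\ref{lem:hdescenttheorem} and Bhatt--Scholze''. Neither gives this: Lemma~\ref{lem:hdescenttheorem} is descent for the full complex $\A_{\drw}$, Bhatt--Scholze concerns only $W\omega^0=W\OO$, and the paper's descent statement for $\A^0_{\drw}$ (Corollary~\ref{cor:deg0-hdescent}) is itself \emph{deduced from} the present lemma, so invoking anything of that kind is circular. The repair --- and the route the paper actually takes --- is to bypass descent for the individual $\A^r_{\drw}$ entirely and compute $H^s_{\h}(Z,\A^r_{\drw})$ by the hypercover formula as $\varinjlim H^s((U_{\kr},\overline{U}_{\kr})_0,W\omega^r)_{\Q}$ over $\h$-hypercovers by semistable pairs. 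There the levelwise slope identification $\bigl(H^s(\cdot,W\omega^r)_{\Q},p^rF\bigr)\cong\bigl(H^{r+s}(\cdot,W\omega^{\kr})_{\Q},\varphi\bigr)^{[r,r+1[}$ is available (after the finite base change $\OO_{K'}/\OO_K$ that guarantees finite-dimensionality via crystalline base change), and the colimit of the right-hand sides is controlled by Lemma~\ref{lem:hdescenttheorem}. With that substitution your argument closes up and coincides with the paper's; your closing suggestion of a functorial direct-sum splitting of the filtration in the derived category is more than is needed and would itself require care to make strictly functorial enough to $\h$-sheafify.
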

\begin{proof}
The existence of the spectral sequence follows as in the classical case, meaning it is induced from the naive filtration of the complex $\A_{\drw}$. 
As mentioned earlier, the Frobenius $F$ induces an endomorphism of the de Rham--Witt complex, which in each degree $r$ is given by $p^rF$. 
It $\h$-sheafifies well. 
Thus we can use the same argumentation as in \cite[III.3.1]{Illusie1979} to see that the spectral sequence is Frobenius equivariant. Namely, the Frobenius endomorphism of $\A_{\drw}$ fixes  $\A_{\drw}^{\geqslant r}$ for all $r\geqslant 0$ and thus it induces an endomorphism of the spectral sequence. 
On the abutment $H^{r+s}_{\h}(Z,\A_{\drw})$, it coincides with the Frobenius action $\varphi$. 
On $H^s_{\h}(Z,\A_{\drw}^r)$ it is given by $p^r F$.

To see that the spectral sequence degenerates, we will show that $H^s_{\h}(Z,\A_{\drw}^r)$ is a finite rank $K_0$-vector space for all $r,s\geqslant 0$.
More precisely, we will show that $(H^s_{\h}(Z,\A_{\drw}^r), p^r F)$ is canonically isomorphic to the part of $(H^{r+s}_{\h}(Z,\A_{\drw}),\varphi)$ which has slopes in $[r,r+1[$,   denoted  by  $(H^{r+s}_{\h}(Z,\A_{\drw}),\varphi)^{[r,r+1[}$. 
The statement then follows from the fact that the cohomology groups $H^{r+s}_{\h}(Z,\A_{\drw})$ are finite rank $K_0$-vector spaces \cite[p.$\;$5]{NekovarNiziol2016}.

Let $(U_{\kr},\overline{U}_{\kr})\rightarrow Z$ by an $\h$-hypercover of $Z$ by semistable pairs over $K$ which exists by de Jong's alteration theorem. 
For every $n$, let $K_{U_n}:=\Gamma(\overline{U}_{n,K},\OO_{\overline{U}_n})$ which is a finite product of finite extensions of $K$ labelled by the connected components of $\overline{U}_n$, that is $K_{U_n}=\prod K_{n,i}$. 
As we may assume that all the fields $K_{n,i}$ are Galois over $K$, we choose a finite Galois extension $(\OO_{K'},K')/(\OO_K,K)$ with residue field $k'$ such that $K'$ is Galois over all the fields $K_{n,i}$ for fixed $n$ (c.f.$\;$proof of \cite[Prop.$\;$3.20]{NekovarNiziol2016}). 
Then  $(U'_n,\overline{U}'_n):=  (U_n,\overline{U}_n)\times_{\OO_K} \OO_{K'}$ is a proper log smooth fine log scheme of Cartier type over $\OO_{K'}^\times$.
Base change for crystalline cohomology implies that   one has  isomorphisms
$$H^s((U'_n,\overline{U}'_n)_0, W\omega^r_{\Q}) \cong H^s((U_n,\overline{U}_n)_0, W\omega^r_{\Q}) \otimes_{K_0} K'_0$$
where $K'_0$ denotes the fraction field of $W(k')$. 
As the left hand side is a finite  dimensional $K'_0$-vector space, it follows that $H^s((U_n,\overline{U}_n)_0, W\omega^r_{\Q})$ is a finite dimensional $K_0$-vector space. We can do this for all $n$ separately.
It follows, that the spectral sequence
	$$
	E_1^{rs}=H^s((U_n,\overline{U}_n)_0, W\omega^r_{\Q})\Rightarrow H^{r+s}((U_n,\overline{U}_n)_0,W\omega^{\kr}_{\Q}),
	$$
which is again induced by the naive filtration of the logarithmic de Rham--Witt complex, degenerates at $E_1$, and that we have canonical  isomorphisms
$$\left(H^s((U_n,\overline{U}_n)_0,W\omega^r_{\Q}), p^r F\right) \cong \left(H^{r+s}((U_n,\overline{U}_n)_0,W\omega^{\kr}_{\Q}), \varphi\right)^{[r,r+1[}$$
for all $n$. 
Accordingly, there is a canonical  isomorphism
$$\left(H^s((U_{\kr},\overline{U}_{\kr})_0,W\omega^r_{\Q}), p^r F\right) \cong \left(H^{s+q}((U_{\kr},\overline{U}_{\kr})_0,W\omega^{\kr}_{\Q}), \varphi\right)^{[r,r+1[}.$$
of finite dimensional $K_0$-vector spaces. 

But as mentioned before,  $\varphi$ and $F$ sheafify well with respect to the $\h$-topology, so that we may take the limit over all $\h$-hypercovers of $Z$ by semistable pairs over $K$ and obtain a canonical isomorphism
	$$
	(H^s_{\h}(Z,\A_{\drw}^r),p^r F)\cong (H^{r+s}_{\h}(Z,\A_{\drw}),\varphi)^{[r,r+1[}
	$$
where the right hand side is the part of $(H^{r+s}_{\h}(Z,\A_{\drw}),\varphi)$ with slope in the interval $[r,r+1[$. 
In particular, the $H^s(Z,\A_{\drw}^r)$ $s\geqslant 0$,  are finite rank $K_0$-vector spaces and the spectral sequence from the statement degenerates.
\end{proof}

For obvious reasons this spectral sequence is called the slope spectral sequence. 
 
\begin{corollary}\label{cor:slope-0-Part}
Let $Z\in\Var(K)$. 
For any $q\geqslant0$ the slope spectral sequence yields an isomorphism
  	$$
  	H^q_{\h}(Z,\A_{\drw})^{<1}\xrightarrow{\sim} H^q_{\h}(Z,\A_{\drw}^0),
  	$$
where on the left hand side we mean the part of $H^q_{\h}(Z,\A_{\drw})$ where Frobenius acts with slope $<1$. 
\end{corollary}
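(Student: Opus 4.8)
The plan is to deduce this exactly as Corollary~\ref{cor:Hodge-0-Part} is deduced from Lemma~\ref{lem:Hodge-deRham}, replacing the Hodge-to-de~Rham spectral sequence by the slope spectral sequence of Lemma~\ref{Lem:SlopeSpectralSequence}. Since that spectral sequence degenerates at $E_1$, the naive filtration of $\A_{\drw}$ by the subcomplexes $\A_{\drw}^{\geqslant r}$ induces a finite decreasing filtration $\fil^{\kr}$ on $H^q_{\h}(Z,\A_{\drw})$ with $\mathrm{gr}^r H^q_{\h}(Z,\A_{\drw})\cong H^{q-r}_{\h}(Z,\A_{\drw}^r)$ for $0\leqslant r\leqslant q$, and in particular the edge morphism of the spectral sequence identifies the top quotient $H^q_{\h}(Z,\A_{\drw})/\fil^1 H^q_{\h}(Z,\A_{\drw})$ with $E_1^{0,q}=H^q_{\h}(Z,\A_{\drw}^0)$. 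The map in the statement will be this edge morphism precomposed with the inclusion of the slope~$<1$ part, so the whole content is to check that $\fil^1 H^q_{\h}(Z,\A_{\drw})$ is precisely the part of $H^q_{\h}(Z,\A_{\drw})$ on which $\varphi$ acts with slopes $\geqslant 1$.

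For that I would use what has already been established in the proof of Lemma~\ref{Lem:SlopeSpectralSequence}: the spectral sequence is Frobenius-equivariant, the filtration $\fil^{\kr}$ is $\varphi$-stable because $\varphi$ preserves $\A_{\drw}^{\geqslant r}$, and there is a canonical identification $\bigl(H^{q-r}_{\h}(Z,\A_{\drw}^r),p^r F\bigr)\cong\bigl(H^q_{\h}(Z,\A_{\drw}),\varphi\bigr)^{[r,r+1[}$, so each $\mathrm{gr}^r H^q_{\h}(Z,\A_{\drw})$ carries Frobenius slopes lying in the interval $[r,r+1[$. Moreover $\varphi$ is bijective on the finite-dimensional $K_0$-vector space $H^q_{\h}(Z,\A_{\drw})$: by the comparison quasi-isomorphism~(\ref{equ:quasi-iso1}) this group is $H^q_{\h}(Z,\A_{\hk})$, on which Frobenius is bijective, being so on the rational Hyodo--Kato cohomology of the semistable pairs occurring in an $\h$-hypercover of $Z$. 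Hence $H^q_{\h}(Z,\A_{\drw})$ is an isocrystal over the perfect field $k$, it decomposes canonically according to slopes, and a $\varphi$-stable subspace whose successive quotients have slopes in the pairwise disjoint intervals $[r,r+1[$ with $r\geqslant 1$ must coincide with the sum of the isotypic components of slope $\geqslant 1$. Thus $\fil^1 H^q_{\h}(Z,\A_{\drw})=H^q_{\h}(Z,\A_{\drw})^{\geqslant 1}$, so that $H^q_{\h}(Z,\A_{\drw})/\fil^1 H^q_{\h}(Z,\A_{\drw})=H^q_{\h}(Z,\A_{\drw})^{<1}$ and the edge morphism furnishes the desired isomorphism.

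In fact the corollary is essentially the instance $r=0$, $s=q$ of the displayed isomorphism proved along the way in Lemma~\ref{Lem:SlopeSpectralSequence}, which already reads $\bigl(H^q_{\h}(Z,\A_{\drw}^0),F\bigr)\cong\bigl(H^q_{\h}(Z,\A_{\drw}),\varphi\bigr)^{[0,1[}$; the only thing one must add is that this abstract isomorphism is realised by the edge map of the degenerate slope spectral sequence, which is immediate from the construction of the slope identification. Accordingly there is no genuine obstacle here beyond careful bookkeeping with the edge morphisms and the slope decomposition of an isocrystal; all the substantive work went into Lemma~\ref{Lem:SlopeSpectralSequence}, and this statement is its expected corollary, strictly parallel to Corollary~\ref{cor:Hodge-0-Part} on the de~Rham side.
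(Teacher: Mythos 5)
Your proposal is correct and matches the paper's (implicit) argument: the paper gives no separate proof of Corollary~\ref{cor:slope-0-Part}, treating it as the $r=0$, $s=q$ instance of the isomorphism $(H^s_{\h}(Z,\A_{\drw}^r),p^r F)\cong (H^{r+s}_{\h}(Z,\A_{\drw}),\varphi)^{[r,r+1[}$ established in the proof of Lemma~\ref{Lem:SlopeSpectralSequence}, exactly as you observe in your final paragraph. Your additional bookkeeping with the edge morphism and the slope decomposition of the $\varphi$-stable filtration is a sound and welcome elaboration, but not a different route.
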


This allows us to transfer the descent statement from Lemma~\ref{lem:hdescenttheorem} to differentials of degree zero.

\begin{corollary}\label{cor:deg0-hdescent}
For any proper log smooth fine log scheme of Cartier type $(U,\overline{U})\in\PP^{log}_K$ over $\OO_K^\times$, the canonical map 
	$$
  	R\Gamma_{\et}((U,\overline{U})_0,W\omega^0)_{\Q} \rightarrow R\Gamma_{\h}(U,\A_{\drw}^0)
  	$$
is a quasi-isomorphism.
\end{corollary}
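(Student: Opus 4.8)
The plan is to derive this from the $\h$-descent theorem for the full logarithmic de~Rham--Witt complex, Lemma~\ref{lem:hdescenttheorem}, by extracting the Frobenius slope $<1$ part on both sides. First I would use that, by construction, $\A_{\drw}$ is represented by the complex of $\h$-sheaves $[\A_{\drw}^0\to\A_{\drw}^1\to\cdots]$ obtained by $\h$-sheafifying the presheaf of logarithmic de~Rham--Witt complexes degree by degree, so that the canonical map of Lemma~\ref{lem:hdescenttheorem}
	$$
	\lambda: R\Gamma_{\et}((U,\overline{U})_0,W\omega^{\kr}_{\Q})\xrightarrow{\sim} R\Gamma_{\h}(U,\A_{\drw})
	$$
is a morphism of complexes filtered by the respective naive filtrations, and is compatible with Frobenius --- this last point being exactly the assertion, already exploited in the proof of Lemma~\ref{Lem:SlopeSpectralSequence}, that $F$ and $\varphi$ $\h$-sheafify well. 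Passing to the associated graded for the naive filtrations, $\lambda$ induces a morphism from the slope spectral sequence of the logarithmic de~Rham--Witt complex of $(U,\overline{U})_0$ to the slope spectral sequence of Lemma~\ref{Lem:SlopeSpectralSequence}, whose component on $E_1$-terms in bidegree $(r,s)$ is precisely the canonical comparison map $H^s_{\et}((U,\overline{U})_0,W\omega^r_{\Q})\to H^s_{\h}(U,\A_{\drw}^r)$. Thus the statement to be proved is the case $r=0$ of the claim that this map is an isomorphism for all $r,s\geqslant 0$.

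Second, I would recall that both spectral sequences degenerate at $E_1$: for the target this is Lemma~\ref{Lem:SlopeSpectralSequence}, and for the source it is obtained exactly as in the proof of that lemma, by base change to a finite extension of $K$ (the residue field being already perfect) together with the logarithmic version of Illusie's slope theory. What is crucial is that in both cases $E_1^{rs}=E_\infty^{rs}$ is canonically the part of $H^{r+s}$ on which Frobenius acts with slopes in $[r,r+1[$, so that the abutment filtration is the slope filtration; being the slope filtration, it is split, functorially, by the slope decomposition of a finite dimensional $K_0$-vector space equipped with a bijective Frobenius.

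Granting this, the conclusion is formal. By Lemma~\ref{lem:hdescenttheorem} the map $\lambda$ is a quasi-isomorphism, hence an isomorphism on each $H^n$; being Frobenius equivariant, it is strictly compatible with the slope filtrations, so the induced map on $\mathrm{gr}^r H^{r+s}=E_\infty^{rs}$ is an isomorphism for all $r,s$. Taking $r=0$, and using that the edge maps of the two degenerate spectral sequences identify $H^q_{\et}((U,\overline{U})_0,W\omega^0_{\Q})$ with the slope $<1$ part of $H^q_{\et}((U,\overline{U})_0,W\omega^{\kr}_{\Q})$, respectively $H^q_{\h}(U,\A_{\drw}^0)$ with the slope $<1$ part of $H^q_{\h}(U,\A_{\drw})$ --- the latter being Corollary~\ref{cor:slope-0-Part} --- one obtains that the canonical map of the statement is an isomorphism on $H^q$ for every $q$, hence a quasi-isomorphism.

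I expect the only real work to be bookkeeping rather than substance: checking that the canonical map in the statement genuinely coincides with the degree $0$ component, on associated graded, of the map $\lambda$ --- which amounts to commutativity of the square built from the projections $W\omega^{\kr}\twoheadrightarrow W\omega^0$ on the étale side and $\A_{\drw}\twoheadrightarrow\A_{\drw}^0$ on the $\h$ side together with the two comparison morphisms, and follows from naturality of $\h$-sheafification --- and verifying that $\lambda$ is indeed filtered and Frobenius equivariant, so that the slope decomposition applies compatibly on both sides. Equivalently, and perhaps more transparently, one may avoid mentioning spectral sequences altogether: $\lambda$ restricts to an isomorphism on slope $<1$ parts by functoriality of the slope decomposition, and Corollary~\ref{cor:slope-0-Part} together with its classical analogue for $(U,\overline{U})_0$ identifies these slope $<1$ parts with $R\Gamma_{\h}(U,\A_{\drw}^0)$ and $R\Gamma_{\et}((U,\overline{U})_0,W\omega^0)_{\Q}$ respectively.
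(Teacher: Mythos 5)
Your argument is correct and is essentially the paper's own proof: the paper likewise deduces the corollary from the commutative square comparing the slope $<1$ parts of $H^q((U,\overline{U})_0,W\omega^{\kr})_{\Q}$ and $H^q_{\h}(U,\A_{\drw})$ (identified with the degree-zero terms via the classical and $\h$-sheafified slope spectral sequences, i.e.\ Corollary~\ref{cor:slope-0-Part} and its classical analogue) with the quasi-isomorphism of Lemma~\ref{lem:hdescenttheorem}. Your extra bookkeeping about the full morphism of spectral sequences and all slope intervals $[r,r+1[$ is sound but not needed for the $r=0$ case.
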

\begin{proof}
For any $q\geqslant 0$ we have a commutative diagram
	$$
	\xymatrix{
	H^q((U,\overline{U})_0,W\omega^{\kr})_{\Q}^{<1} \ar[r]^{\sim} \ar[d]^{\sim} & H^q((U,\overline{U})_0,W\omega^0)_{\Q} \ar[d]\\
	H^q_{\h}(U,\A_{\drw})^{<1} \ar[r]^{\sim} & H^q_{\h}(U,\A_{\drw}^0)}
	$$
where the horizontal isomorphisms are induced from the classical and the $\h$-sheafified slope spectral sequence respectively and the vertical maps are the canonical morphisms. 
Since the left vertical map is an isomorphism by Lemma~\ref{lem:hdescenttheorem}, the right one is as well.
\end{proof}

We can take this a bit further. 

\begin{lemma}\label{lem:K-to-k}
Let $X$ be a reduced, proper and flat $\OO_K$-scheme of finite type. Then there is a quasi-isomorphism
$$R\Gamma_{\h}(X_K,\A_{\drw}^0)\cong R\Gamma_{\et}(X_0,W\OO)_{\Q}.$$
\end{lemma}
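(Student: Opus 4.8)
The plan is to compute both sides of the claimed quasi-isomorphism as a homotopy limit over a common hypercover, applying Corollary~\ref{cor:deg0-hdescent} on the generic side and the Bhatt--Scholze descent theorem for Witt vector cohomology \cite{BhattScholze2017} on the special side. First, since $X$ is reduced, proper and flat over $\OO_K$, its generic fibre $X_K$ sits as a dense open subscheme of $X$, so $(X_K,X)$ is an arithmetic $K$-pair in the sense of Definition~\ref{def:Pairs}. By de~Jong's alteration theorem \cite[2.3~Thm.]{Beilinson2012}, iterated in the standard fashion to build hypercovers from alterations as in \cite{Beilinson2012} and \cite{NekovarNiziol2016}, I would choose a simplicial semistable pair $(U_\bullet,\overline{U}_\bullet)$ over $K$ — so that each $\overline{U}_n$ is a reduced proper flat $\OO_K$-scheme with reduced special fibre $\overline{U}_{n,0}$, lying in $\PP_K^{log}$ — together with an augmentation to $(X_K,X)$ for which $U_\bullet\to X_K$ is an $\h$-hypercover of $K$-varieties. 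The key additional property to secure is that the induced simplicial map on special fibres $\overline{U}_{\bullet,0}\to X_0$ is an $h$-hypercover of $k$-schemes; this rests on the facts that the alterations in de~Jong's theorem are proper and surjective and that $\h$-covers, as well as the coskeleton formation that governs the hypercover condition, are stable under the base change along the closed immersion $X_0\hookrightarrow X$.

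Granting this, the computation is formal. On the generic side, cohomological descent for the $\h$-sheaf $\A_{\drw}^0$ along the $\h$-hypercover $U_\bullet\to X_K$ gives $R\Gamma_\h(X_K,\A_{\drw}^0)\xrightarrow{\sim}\holim_n R\Gamma_\h(U_n,\A_{\drw}^0)$, and Corollary~\ref{cor:deg0-hdescent} identifies each term with $R\Gamma_\et((U_n,\overline{U}_n)_0,W\omega^0)_\Q$, hence with $R\Gamma_\et(\overline{U}_{n,0},W\OO)_\Q$ via the Hyodo--Kato isomorphism $W\omega^0\cong W\OO$ \cite[Prop.~(4.6)]{HyodoKato1994}. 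On the special side, $R\Gamma_\et(-,W\OO)_\Q$ is unchanged under passing to the reduction, so we may work with $X_0$ as it is, and the Bhatt--Scholze descent theorem \cite{BhattScholze2017} — to the effect that $Y\mapsto R\Gamma_\et(Y,W\OO)_\Q$ satisfies $h$-descent — applied to the $h$-hypercover $\overline{U}_{\bullet,0}\to X_0$ yields $R\Gamma_\et(X_0,W\OO)_\Q\xrightarrow{\sim}\holim_n R\Gamma_\et(\overline{U}_{n,0},W\OO)_\Q$. Splicing the two chains of quasi-isomorphisms through the common term $\holim_n R\Gamma_\et(\overline{U}_{n,0},W\OO)_\Q$ gives the asserted quasi-isomorphism, and it is independent of the chosen hypercover because the comparison maps from $R\Gamma_\h(X_K,\A_{\drw}^0)$ and from $R\Gamma_\et(X_0,W\OO)_\Q$ into the respective homotopy limits are canonical and compatible with refinements of hypercovers.

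I expect the main obstacle to be precisely the step isolated in the first paragraph: arranging that the special fibre $\overline{U}_{\bullet,0}$ of an $\h$-hypercover of $X_K$ by semistable pairs is a genuine $h$-hypercover of $X_0$. This is the analogue of the subtle point in \cite{BerthelotEsnaultRuelling2012}, where the special fibre of a proper hypercover over $\OO_K$ need not be a proper hypercover of $X_0$, which forced the authors to prove a separate injectivity theorem \cite[Thm.~1.5]{BerthelotEsnaultRuelling2012}. Our situation is more forgiving: Bhatt--Scholze descent holds for the $h$-topology, which is much coarser than proper cdh-descent, so one only needs $\overline{U}_{\bullet,0}\to X_0$ to be an $h$-hypercover rather than a proper one, and such a hypercover can be extracted directly from de~Jong's (proper, surjective) alterations together with the stability of $\h$-covers under base change, with no injectivity statement required.
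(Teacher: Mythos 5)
Your proposal is correct and follows essentially the same route as the paper: de~Jong gives a semistable alteration of the arithmetic pair $(X_K,X)$, the resulting simplicial object is an $\h$-hypercover on both the generic and special fibres, Corollary~\ref{cor:deg0-hdescent} plus $W\omega^0\cong W\OO$ handles each level, and Bhatt--Scholze $\h$-descent for rational Witt vector cohomology closes the loop on the special fibre. The only cosmetic difference is that the paper takes the \v{C}ech nerve of a single alteration (so compatibility of the special fibres with coskeleta is automatic, since fibre products commute with base change along $X_0\hookrightarrow X$), whereas you build the hypercover by iterated alterations; the point you isolate as the main obstacle is resolved exactly as you suggest.
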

\begin{proof} 
Since $(X_K,X)$ forms an arithmetic pair in the sense of Beilinson (see Definition~\ref{def:Pairs}), it has by  de~Jong's   theorem a strictly semistable alteration $(U,\overline{U})\rightarrow (X_K,X)$. This gives rise to an $\h$-cover $U\rightarrow X_K$  of the generic fibre, and an $\h$-cover $\overline{U}_0\rightarrow X_0$ of the special fibre.
Denote by $(U_{\kr},\overline{U}_{\kr})\rightarrow (X_K,X)$ its \v{C}ech nerve. In particular $U_{\kr}\rightarrow X_K$ is an $\h$-hypercover of  the generic fibre of $X$ and $\overline{U}_{\kr,0}\rightarrow X_0$ is an $\h$-hypercover of its special fibre.

Using Corollary~\ref{cor:deg0-hdescent} we have
	$$R\Gamma_{\h}(X_K,\A_{\drw}^0) \xrightarrow{\sim} R\Gamma_{\h}(U_{\kr},\A_{\drw}^0) \xleftarrow{\sim} R\Gamma_{\et}((U_{\kr},\overline{U}_{\kr})_0,W\omega^0)_{\Q}  \cong  R\Gamma_{\et}(\overline{U}_{\kr,0},W\OO)_{\Q}.
	$$
However, by \cite[Prop.~11.41]{BhattScholze2017} rational Witt cohomology satisfies cohomological $\h$-descent and hence the right most expression is just $R\Gamma_{\et}(X_0,W\OO)_{\Q}$. 
\end{proof}

Therefore we can rewrite Corollary~\ref{cor:slope-0-Part}.

\begin{corollary}
Let $X$ be a reduced, proper and flat $\OO_K$-scheme of finite type. For any $q\geqslant0$ the slope spectral sequence yields an isomorphism
  	$$
  	H^q_{\h}(X_K,\A_{\drw})^{<1}\xrightarrow{\sim} H^q(X_0,W\OO)_{\Q}.
  	$$
\end{corollary}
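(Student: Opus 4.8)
The plan is to obtain the desired isomorphism as a two-step composition, reading off the first step from Corollary~\ref{cor:slope-0-Part} and the second from Lemma~\ref{lem:K-to-k}; no genuinely new argument is needed.

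First I would check that the generic fibre $X_K$ is an object of $\Var(K)$, so that the $\h$-sheafified machinery of the previous sections applies. Since $\Spec K\to\Spec\OO_K$ is an open immersion, its base change $X_K\hookrightarrow X$ is again an open immersion; an open subscheme of the reduced, separated, finite-type $\OO_K$-scheme $X$ is itself reduced, separated and of finite type over $K$. Hence $Z:=X_K\in\Var(K)$.

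Next I would apply Corollary~\ref{cor:slope-0-Part} to $Z=X_K$. This yields a canonical isomorphism
$$
H^q_{\h}(X_K,\A_{\drw})^{<1}\xrightarrow{\sim}H^q_{\h}(X_K,\A_{\drw}^0),
$$
induced by the slope spectral sequence of Lemma~\ref{Lem:SlopeSpectralSequence} through its degeneration at $E_1$ together with the identification of the Frobenius-slope-$<1$ part of the abutment with the $r=0$ column. Finally, Lemma~\ref{lem:K-to-k} supplies a quasi-isomorphism $R\Gamma_{\h}(X_K,\A_{\drw}^0)\cong R\Gamma_{\et}(X_0,W\OO)_{\Q}$, hence an isomorphism $H^q_{\h}(X_K,\A_{\drw}^0)\cong H^q(X_0,W\OO)_{\Q}$ for every $q$. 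Composing the two isomorphisms proves the corollary, and one checks along the way that the composite is compatible with the Witt-vector Frobenius $F$.

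At this stage there is essentially no obstacle left to overcome: all the substance has already been discharged in the earlier results — the construction, Frobenius-equivariance and $E_1$-degeneration of the $\h$-sheafified slope spectral sequence (Lemma~\ref{Lem:SlopeSpectralSequence}), the degree-zero $\h$-descent statement (Corollary~\ref{cor:deg0-hdescent}), and Bhatt--Scholze's cohomological $\h$-descent for rational Witt vector cohomology, which together yield Lemma~\ref{lem:K-to-k}. The only mild point to keep in view is that the two isomorphisms are induced by compatible natural maps, so that their composite is again the canonical morphism; this is immediate from the commutative square in the proof of Corollary~\ref{cor:deg0-hdescent}.
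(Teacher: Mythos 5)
Your proposal is correct and matches the paper's intended argument exactly: the corollary is stated immediately after Lemma~\ref{lem:K-to-k} precisely as the composite of that lemma with Corollary~\ref{cor:slope-0-Part} applied to $Z=X_K$, which is what you do. No further comment is needed.
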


\section{A vanishing theorem}
 
We can now put the pieces together to give a simplified proof of the vanishing theorem due to  Pierre Berthelot, H\'el\`ene Esnault and Kay R\"ulling.  
We will use the following statement about de~Rham cohomology groups. 
Note that $\h$-sheafifying the de~Rham complex results in Deligne's de~Rham complex \cite[Thm.~7.4]{HuberJoerder2014}, i.e. $H^q_{\h}(Z,\A_{\dr}) =H^q_{\dr}(Z)$. 

\begin{lemma}\label{lem:hodge-slope}
Let $Z\in\Var(K)$ be proper with only Du~Bois singularities, and assume that $H^q(Z,\OO)=0$ for some $q\geqslant 0$. 
Then the smallest Hodge slope of $H^q_{\dr}(Z)$  is at least $1$. 
\end{lemma}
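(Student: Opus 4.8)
The plan is to translate the assertion about the smallest Hodge slope into the vanishing of a single graded piece of the Hodge filtration, and then feed in the defining property of Du~Bois singularities. Since the Hodge filtration on $H^q_{\dr}(Z)=H^q_{\h}(Z,\A_{\dr})$ is exhaustive with $\fil^0 H^q_{\dr}(Z)=H^q_{\dr}(Z)$ and is trivial in negative degrees, the statement that the smallest Hodge slope of $H^q_{\dr}(Z)$ is at least $1$ is equivalent to the equality $\fil^1 H^q_{\dr}(Z)=H^q_{\dr}(Z)$, i.e. to the vanishing of $\mathrm{gr}^0_{\fil} H^q_{\dr}(Z)$. This graded piece is precisely the part of $H^q_{\h}(Z,\A_{\dr})$ of Hodge slope $<1$, so by Corollary~\ref{cor:Hodge-0-Part} (which rests on the degeneration of Lemma~\ref{lem:Hodge-deRham}) it will suffice to prove that $H^q_{\h}(Z,\A_{\dr}^0)=0$.

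To identify $H^q_{\h}(Z,\A_{\dr}^0)$ I would invoke Huber--J\"order. The $\h$-sheaf $\A_{\dr}^0$ is the $\h$-sheafification of $\Omega^0=\OO$, and by \cite[Thm.~7.4, Thm.~7.7]{HuberJoerder2014} its $\h$-cohomology computes the zeroth graded piece $\underline{\Omega}_Z^0$ of Deligne's filtered de~Rham complex; that is, there is a canonical quasi-isomorphism $R\Gamma_{\h}(Z,\A_{\dr}^0)\cong R\Gamma(Z,\underline{\Omega}_Z^0)$, compatible with the identification of $R\Gamma_{\h}(Z,\A_{\dr})$ with Deligne's de~Rham complex already recorded in the excerpt.

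Now the hypothesis enters: that $Z$ has only Du~Bois singularities means by definition that the canonical morphism $\OO_Z\to\underline{\Omega}_Z^0$ is a quasi-isomorphism. Combining this with the previous step gives $H^q_{\h}(Z,\A_{\dr}^0)\cong H^q(Z,\OO_Z)$, which is $0$ by assumption. Tracing this back through Corollary~\ref{cor:Hodge-0-Part} yields $\mathrm{gr}^0_{\fil} H^q_{\dr}(Z)=0$, which is exactly the claim that the smallest Hodge slope of $H^q_{\dr}(Z)$ is at least $1$.

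I do not expect a genuine obstacle here; the one point requiring care is bookkeeping. One must make sure that the ``Hodge slope'' filtration implicit in the statement is Deligne's Hodge filtration on $H^q_{\dr}(Z)$, so that ``slope $<1$'' really is the class $\mathrm{gr}^0_{\fil}$, and that the graded pieces of the filtered $\h$-sheaf $\A_{\dr}$ are Huber--J\"order's $\h$-sheafified Du~Bois complexes, so that the $p=0$ comparison used above is the correct one. Both facts are established in \cite{HuberJoerder2014}, so this reduces to citing the relevant results precisely.
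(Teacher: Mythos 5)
Your proposal is correct and follows essentially the same route as the paper: identify $H^q_{\h}(Z,\A_{\dr}^0)$ with $H^q(Z,\OO)$ via Huber--J\"order's comparison with the Du~Bois complex (the paper cites \cite[Cor.~7.17]{HuberJoerder2014} directly for this), and then use the $E_1$-degeneration of the $\h$-sheafified Hodge-to-de~Rham spectral sequence (Lemma~\ref{lem:Hodge-deRham} / Corollary~\ref{cor:Hodge-0-Part}) to conclude that the slope-$<1$ part of $H^q_{\dr}(Z)$ vanishes. Your write-up merely makes explicit the intermediate step through $\underline{\Omega}_Z^0$ that the cited corollary encapsulates.
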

\begin{proof}
If $Z$ has only Du~Bois singularities $H^q_{\h}(Z,\A_{\dr}^0)=H^q(Z,\OO)$ by \cite[Cor.~7.17]{HuberJoerder2014}.  As the $\h$-sheafified Hodge-to-de~Rham spectral sequence from Lemma~\ref{lem:Hodge-deRham} degenerates for a proper $K$-variety at $E_1$, the hypothesis  $H^q(Z,\OO)=0$ implies that the smallest Hodge slope of  $H^q_{\h}(Z,\A_{\dr})\cong H^q_{\dr}(Z)$ is at least $1$.
\end{proof}

\begin{remark}
In general we can say that for a proper $K$-variety $Z\in\Var(K)$ such that $H^q_{\h}(Z,\OO_{\h})=0$ for some $q$ the smallest Hodge slope of $H^q_{\dr}(Z)$  is at least $ 1$. 
\end{remark}

We obtain now the desired vanishing theorem in a slightly more general form than originally stated.
 
\begin{theorem}
 Let $X$ be a proper, reduced and flat scheme over  $\OO_K$, such that $X_K$ has at most Du~Bois singularities. Fix $q \in \N_0$. If $H^q(X_K,\OO)=0$, then  $H^q(X_0,W\OO)_{\Q}=0$.
\end{theorem}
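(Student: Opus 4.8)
The plan is to assemble the ingredients developed in the preceding sections and run the slope-comparison argument that underlies the original semistable proof, but now directly on the $\h$-site so as to avoid any reduction to the semistable case. The key point is that both cohomology groups of interest are slopes of a single object, namely the Beilinson--Hyodo--Kato cohomology $H^q_{\h}(X_{K,\overline{K}},\A_{\hk}^B)$, which by the D\'eglise--Nizio\l{} lemma is an admissible filtered $(\varphi,N,G_K)$-module.

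First I would invoke Lemma~\ref{lem:hodge-slope}: since $X_K$ is proper with at most Du~Bois singularities and $H^q(X_K,\OO)=0$, the smallest Hodge slope of $H^q_{\dr}(X_K)$ is at least $1$. Because $\A_{\dr}$ $\h$-sheafifies to Deligne's de~Rham complex with Deligne's Hodge filtration, this says precisely that $H^q_{\h}(X_K,\A_{\dr})^{<1}=0$, and by the filtered quasi-isomorphism $\varepsilon^\ast$ of (\ref{epsilon*}) the same holds after base change to $\overline{K}$. Next I would translate this into a statement about the Hodge polygon of the filtered $(\varphi,N,G_K)$-module $H^q_{\h}(X_{K,\overline{K}},\A_{\hk}^B)$: the de~Rham realisation supplies the Hodge filtration, so the vanishing of the Hodge-slope-$<1$ part means its Hodge polygon has no segments of slope $<1$. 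By Lemma~\ref{lem:polygon}, the Newton polygon lies above the Hodge polygon, hence the Newton polygon likewise has no slopes $<1$; equivalently, the Frobenius-slope-$<1$ part of $H^q_{\h}(X_{K,\overline{K}},\A_{\hk}^B)$ vanishes. Using the $G_K$-equivariant quasi-isomorphism $\varepsilon^\ast$ for $\A_{\hk}^B$ and the identification (\ref{equ:quasi-iso1}) of $\A_{\hk}^B$ with $\A_{\drw}$, this gives $H^q_{\h}(X_K,\A_{\drw})^{<1}=0$.

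Finally I would apply the corollary following Lemma~\ref{lem:K-to-k}, which identifies $H^q_{\h}(X_K,\A_{\drw})^{<1}$ with $H^q(X_0,W\OO)_{\Q}$ via the slope spectral sequence of Lemma~\ref{Lem:SlopeSpectralSequence} (Corollary~\ref{cor:slope-0-Part}) together with the Bhatt--Scholze $\h$-descent for rational Witt vector cohomology (Lemma~\ref{lem:K-to-k}). Combining these, $H^q(X_0,W\OO)_{\Q}=0$, which is the assertion.

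The main obstacle I expect is bookkeeping with slopes across the chain of $\h$-sheafified comparison isomorphisms: one must check that the Hodge filtration coming from $\A_{\dr}$ on $H^q_{\h}(X_{K,\overline{K}},\A_{\hk}^B)\otimes_{K_0^{\nr}}\overline{K}$ is exactly the filtration defining the Hodge polygon in the admissible $(\varphi,N,G_K)$-module structure, and that the Frobenius on $\A_{\drw}^0$ (acting as $p^0F=F$) matches the Frobenius $\varphi$ on the slope-$<1$ part of $H^q_{\h}(X_K,\A_{\drw})$ under (\ref{equ:quasi-iso1}). All of this is guaranteed by the compatibilities recorded in Sections~3 and~4, so the argument is essentially a matter of chaining the stated lemmas in the right order; the only subtlety is making sure that "Hodge slope $<1$", "Hodge polygon has a segment of slope $<1$", and "$H^q(X_K,\OO)\neq 0$" are genuinely the same condition, which is precisely the content of Corollary~\ref{cor:Hodge-0-Part} and Lemma~\ref{lem:hodge-slope}.
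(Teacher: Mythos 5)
Your proposal is correct and follows essentially the same route as the paper's proof: Lemma~\ref{lem:hodge-slope} to bound the Hodge polygon, Lemma~\ref{lem:polygon} to transfer this to the Newton polygon of $H^q_{\h}(X_{\overline{K}},\A_{\hk}^B)$, then the quasi-isomorphisms (\ref{epsilon*}) and (\ref{equ:quasi-iso1}), Corollary~\ref{cor:slope-0-Part}, and Lemma~\ref{lem:K-to-k} to land on $H^q(X_0,W\OO)_{\Q}=0$. The compatibility checks you flag at the end are exactly the ones the paper relies on from Sections~3 and~4, so nothing is missing.
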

\begin{proof}
Consider the cohomology group $H^q_{\h}(X_{\overline{K}},\A_{\hk}^B)$.
 By Lemma~\ref{lem:polygon} its Newton polygon lies above its Hodge polygon. 
 Because $X_K$ has only Du~Bois singularities Lemma~\ref{lem:hodge-slope} applies, which means that the smallest Hodge slope of $H^q_{\dr}(X_K)$ is $\geqslant 1$. 
 Therefore the part of $H^q_{\h}(X_{\overline{K}},\A_{\hk}^B)$, where the Newton slope  is $<1$ vanishes. 
 By definition this is exactly the part where Frobenius acts with slope $<1$.
Via the first quasi-isomorphism of (\ref{epsilon*}) and the fact  that the action of $G_K$ commutes with $\varphi$ we deduce the same for $H^q_{\h}(X_K,\A_{\hk}^B)$. 
Finally, the quasi-isomorphisms (\ref{equ:quasi-iso1}) imply $H^q_{\h}(X_K,\A_{\drw})^{<1}=0$ and hence, by Corollary~\ref{cor:slope-0-Part} 
	$$H^q_{\h}(X_{K}, \A_{\drw}^0)=0.$$
But according to Lemma~\ref{lem:K-to-k} this means that $H^q(X_0,W\OO)_{\Q}=0$ as well.
\end{proof}

\begin{remark}
As pointed out in \cite{BerthelotEsnaultRuelling2012} one easily generalises this result to a scheme $X$ as in the theorem, but over a discrete valuation ring $V$ which is not necessarily complete.
\end{remark}

\end{document}